\def\noi{\noindent}
\def\what{\widehat}
\def\wtit{\widetilde}
\newtheorem{definition}{Definition}[section]
\newtheorem{theorem}{Theorem}[section]
\newtheorem{lemma}{Lemma}[section]
\newtheorem{corollary}{Corollary}[section]
\def\adots{\mathinner{\mkern2mu\raise0pt\hbox{.}  
\mkern2mu\raise4pt\hbox{.}\mkern1mu
\raise7pt\vbox{\kern7pt\hbox{.}}\mkern1mu}}
\begin{document}

\title[Quadratic unipotent blocks in general linear, unitary and symplectic
groups] {Quadratic unipotent blocks in general linear, unitary and
symplectic groups}
\author{Bhama Srinivasan}
  \address{Department of Mathematics, Statistics, and Computer Science (MC 249)\\
           University of Illinois at Chicago\\
           851 South Morgan Street\\
           Chicago, IL  60607-7045}
  \email{srinivas@uic.edu}

\begin{abstract}
An irreducible ordinary character of a finite reductive group is called
quadratic unipotent if it corresponds under Jordan decomposition
to a semisimple element $s$ in a dual group such that $s^2=1$.
We prove that there is a bijection between, on the one hand the
set of quadratic unipotent characters of $GL(n,q)$ or $U(n,q)$
for all $n \geq 0$ and on the other hand, the
set of quadratic unipotent characters of $Sp(2n,q)$ for all $n \geq 0$.
We then extend this correspondence to $\ell$-blocks for certain $\ell$
not dividing $q$.
\\
\\
2010 {\it AMS Subject Classification:}  20C33
\end{abstract}

\maketitle

\centerline{To Toshiaki Shoji}

\noindent
\section{Introduction}

\bigbreak\noindent Let ${\mathbb G}$ be a connected, reductive
algebraic group defined over ${\Bbb F}_q$ and $G$ the finite
reductive group of ${\mathbb F}_q$-rational points of ${\mathbb G}$.
The irreducible characters of $G$ are divided into rational Lusztig
series ${\mathcal E}(G,(s))$ where $(s)$ is a semisimple conjugacy
class in a dual group $G^*$ of $G$. Let $\ell$ be a prime not
dividing $q$. Each $\ell$-block of $G$ also determines a conjugacy
class $(s)$ in $G^*$, where now $s$ is an $\ell'$-semisimple
element. The block is said to be isolated if $C_{{\mathbb G}^*}(s)$
is not contained in a proper Levi subgroup of ${\mathbb G}^*$. If a
block is not isolated, the characters in the block in ${\mathcal E}(G,(s))$
can be obtained by Lusztig induction from a Levi subgroup of $G$.
Thus it is important to study the isolated blocks of
$G$. A description of the characters in isolated
blocks of classical groups when $\ell$ and $q$ are odd and $q$ is
large was given in \cite{BS} and \cite{BS2}.

\noi On the other hand, the notion of a perfect isometry between blocks
with abelian defect groups of two finite groups was introduced by
M.Brou\'e \cite{MB}. This leads to a comparison between an
$\ell$-block $B$ of a finite group $G$ and an $\ell$-block $b$ of a
group $H$. If there is a perfect isometry between $B$ and $b$,
certain invariants of the blocks are preserved. Often $H$ is a
``local subgroup" of $G$, for example the normalizer of a defect
group of $B$. In other situations $G$ and $H$ are finite groups of
the same type, e.g. symmetric groups , general linear groups or
unitary groups. (In fact there is a stronger result, i.e. the
abelian defect group conjecture, for symmetric groups and general
linear groups; see \cite{CR}.)

\noi In this paper we study quadratic unipotent characters, i.e.
characters in Lusztig series with $s^2=1$, and quadratic unipotent
blocks, i.e. blocks which contain quadratic unipotent characters,
 of general linear, unitary and symplectic groups. Here
we assume that $q$ and $\ell$ are odd. These blocks include
unipotent blocks and are isolated blocks for the symplectic group.
We first show that there is a natural bijection between the
quadratic unipotent characters of $GL(n,q)$ or $U(n,q)$ for all $n$ and the
quadratic unipotent characters of symplectic groups $Sp(2n,q)$ for all $n$.
Let $e$ be the order  of $q$ mod $\ell$.
If $B$ is a quadratic unipotent block of $GL(n,q)$ with $e$ even or
of $U(n,q)$  with $e$  odd or
$e \equiv 0 ({\rm mod} 4)$ we show that
there is a perfect isometry between $B$ and a quadratic unipotent
block $b$ of a symplectic group $Sp(2m,q)$. This kind of connection
between groups of type $A$ and $C$ appears to be new.

\noi Our main tool is the combinatorics of partitions and symbols related
to the blocks of general linear and symplectic groups. In particular
our work is inspired by a paper of Waldspurger \cite{W2}; a map
which is defined there between two combinatorial configurations can
be used to set up correspondences between blocks as above.

\noi The paper is organized as follows. In Section 2 we describe the construction
and parametrization of quadratic unipotent characters in $GL(n,q)$, $U(n,q)$
and $Sp(2n,q)$. Our main theorem, Theorem \ref{2D}, gives a bijection between the
sets of quadratic unipotent characters in $GL(n,q)$ or $U(n,q)$ for all $n \geq 0$
and the corresponding sets in $Sp(2n,q)$ for all $n \geq 0$. In Section 3 we
parameterize quadratic unipotent blocks with $e$ as above for these groups,
and in Section 4 we prove correspondences between blocks of $GL(n,q)$ or $U(n,q)$
for all $n \geq 0$ and blocks of $Sp(2n,q)$ for all $n \geq 0$. In Section 5 we
construct perfect isometries between corresponding blocks, in the case of abelian
defect groups. Finally in Section 6 we give an alternative interpretation of the
above correspondences. For the groups $G= GL(n,q)$ or $G= U(n,q)$ and $H=Sp(2n,q)$,
we consider groups $G(s)$ and $H(s)$ constructed by Enguehard as dual groups to the
centralizers of a semisimple element $s$ with $s^2=1$ in groups dual to $G$ or $H$.
We then interpret our correspondences as between unipotent blocks of $G(s)$ and $H(s)$.

\noi Notation:  If $G$ is a finite group, ${\rm Irr}(G)$ is the set of
(complex) irreducible characters of $G$. The Weyl group of type
$B_n$ is denoted by $W_n$. The Grothendieck group of an abelian
category $\mathcal C$ is denoted by $K_0(\mathcal C)$.

\noindent \section{ Quadratic Unipotent Characters }

\noindent If $G$ is a finite reductive group the set ${\rm
Irr}(G)$ is partitioned into geometric series by Deligne-Lusztig
 theory, and further into rational series ${\mathcal E}(G,(s))$ where $s\in G^*$ is a
semisimple element (see \cite{CE}, 8.23). For the groups $G$ that we study we
assume throughout this paper that $q$ is odd and $\ell$ is an odd prime not dividing $q$.

\begin{definition} If $\chi \in {\mathcal
E}(G,(s))$ where $s$ satisfies $s^2=1$ we  say $\chi$ is a quadratic
unipotent character.
\end{definition}

\noi These characters were called square-unipotent in \cite{BS2}. In
particular we have the unipotent characters, where $s=1$. If
$G=Sp(2n, q)$ (resp. $SO^\pm(2n,q)$) then $G^*=SO(2n+1,q)$ (resp.
$SO^\pm(2n ,q) $), and if $G=GL(n,q)$ or $G=U(n,q)$ then $G=G^*$. Since
$q$ is odd, if $s^2=1$ where $s\in G^*$ we get quadratic unipotent
characters in ${\mathcal E}(G,(s))$.

\noindent Let $G_n=GL(n,q)$ or $U(n,q)$.
The unipotent characters of $G_n$ are parameterized by partitions of
$n$. More generally, quadratic unipotent characters of $GL(n,q)$ have been
explicitly constructed by Waldspurger \cite{W2}. We generalize his
construction also to $U(n,q)$ below.

\noi  Let $({\mu}_1, {\mu}_2)$ be a pair of partitions where
${\mu}_i$ is a partition of $n_i,\ i=1,2$, with $n_1+n_2=n$. Let
$L=G_{n_1} \times G_{n_2}$ be a Levi subgroup of $G_n$, where
$G_{n_i}$  is a general linear or a unitary group according as
$G_n=GL(n,q)$ or $U(n,q)$ . Let $\mathcal E$ be the unique linear
character of $G_{n_2}$ of order $2$ and let ${\chi}_{\mu_i}$
be the unipotent character of $G_{n_i}$  corresponding to the partition
$\mu_i $. Then the virtual  character
$R^{G_n}_L({\chi}_{\mu_1} \times {\mathcal E} {\chi}_{\mu_2})$
obtained by Lusztig induction from $L$ (which in fact is Harish-Chandra
induction when $G_n=GL(n,q)$) is a quadratic unipotent
character, up to sign. We denote it by $\chi_{({\mu}_1, \mu_2)}$.
All quadratic unipotent characters of $G_n$ are obtained this way,
and thus we have a parametrization of quadratic unipotent characters
by pairs $({\mu}_1, {\mu}_2)$ such that $|{\mu}_1| + |{\mu}_2|=n$.
(We note also that by abuse of notation we use the finite groups
when we write $R^{G_n}_L$.)

\noi An alternative description of the quadratic unipotent
characters of $G_n=GL(n,q)$ or $U(n,q)$ is given as follows. These
characters are precisely the constituents of $R^{G_n}_L(1 \times
{\mathcal E} \times \chi_{(\kappa_1,\kappa_2)})$, where $L$ is a
Levi subgroup of the form $T_1 \times T_2 \times G_{n_0}$, $T_1$
(resp. $T_2$) is a product of $N_1$ (resp. $N _2$) tori of order
$q^2-1$. Let $1$ be the trivial character(of $T_1$ and $\mathcal E$ the
product of the characters of order 2 on each component of $T_2$. The character
${\chi}_{(\kappa_1,\kappa_2)}$ is a $2$-cuspidal character of
$G_{n_0}$, i.e. ${\kappa_1}$ and ${\kappa_2}$ are $2$-cores. We
note that in this case, by the work of Lusztig \cite {L1}
the $R^{G_n}_L$ map
is Harish-Chandra induction for  $U(n,q)$. The endomorphism algebra
of the induced representation is isomorphic to a Hecke algebra of
type $W_{N_1} \times W_{N_2}$.

\noi Let $H_n=Sp(2n,q)$, $q$ odd. We have a similar
description of quadratic unipotent characters of $H_n$, as given by
Lusztig \cite {L1} and Waldspurger (\cite {W1}, 4.9). The
characters are constituents of $R^{H_n}_K(1 \times {\mathcal E}
\times \chi)$, where $K$ is a Levi subgroup of the form $T_1 \times
T_2 \times H_{n_0}$, $T_1$ (resp. $T_2$) is a product of $N_1$ (resp.
$N _2$) tori of order $q-1$. Let $1$ be the trivial character of $T_1$
and $\mathcal E$ the product of the characters of order 2 on each component
of $T_2$. The character $\chi$ is a cuspidal quadratic unipotent
character of $H_{n_0}$ and the $R^{H_n}_K$ map is Harish-Chandra
induction. The endomorphism algebra of the induced representation
is again isomorphic to a Hecke algebra of type $W_{N_1} \times
W_{N_2}$.

\noi We now describe the combinatorics of symbols needed to
parameterize  the quadratic unipotent characters of $H_n$.
 By the work of Lusztig \cite{L1} the unipotent characters of
classical groups are parameterized by equivalence classes of
symbols.  We refer to (\cite{C}, p.465),
(\cite{BMM}, p.~48) for a description of the symbols associated with
unipotent characters of $Sp(2n,q)$, including definitions of the
equivalence relations on symbols and the rank and defect of a
symbol.

\noi We denote a symbol by $\Lambda=(S,T)$ where
$S,T\subseteq {\Bbb N}$. If $\Lambda$ is unordered, it is regarded
as the same as $(T,S)$ and also the same as the symbol obtained by a
shift operation from itself ( \cite{C}, p.~375). The defect of
$\Lambda$ is $|S|-|T|$. We also need to consider ordered symbols to
parameterize unipotent characters of $O^\pm(2n,q)$, which were
described by Waldspurger.

\noi We then have:
\begin{itemize}
\item
 The unipotent characters of $Sp(2n,q)$ are in
bijection with unordered symbols of rank $n$ and odd defect.
\item The unipotent characters of $O^+(2n,q)$ are in
bijection with ordered symbols of rank $n$ and defect $\equiv 0 \
({\rm mod}\ 4)$
\item The unipotent characters of $O^-(2n,q)$ are in
bijection with ordered symbols of rank $n$ and defect $\equiv 2 \
({\rm mod}\ 4)$
\item The irreducible characters of $W_n$ are in bijection
with  unordered symbols of rank $n$ and  defect 1.
\end{itemize}

\noi The operations of ``adding an $a$-hook'' to and
``deleting an $a$-hook" from a partition, and the concept of an
``$a$-core" of  a partition are well-known. Similarly we have
operations of ``adding an $a$-hook or an $a$-cohook'' and ``deleting
an $a$-hook or $a$-cohook" to a symbol $\Lambda$. They can be
described as follows ( \cite{O}, p.-226). Let $\Lambda=(S,T)$. We
say a symbol $\Lambda'$ is obtained from $\Lambda$ by adding an
$a$-hook if it is obtained by deleting a member $x$ of $S$ (or $T$)
and inserting $x+a$ in $S$ (or $T$). We say $\Lambda'$ is obtained
from $\Lambda$ by adding an $a$-cohook if it is obtained from
$\Lambda$ by deleting a member $x$ of $S$ (or $T$) and inserting
$x+a$ in $T$ (or $S$).

\noi We follow the notation of \cite{W1} below.
We define a map $\sigma$ on
ordered symbols by $\sigma(S,T)=(T,S)$.  Let $\wtit S_{n,d}$ be the
set of ordered symbols of rank $n$ and defect $d$, and let
$S_{n,d}=\wtit S_{n,d}\cup\wtit S_{n,-d}$, modulo the relation
$\Lambda \sim\sigma(\Lambda)$.

Let
\begin{eqnarray*}
S_{n,odd} &=& \bigcup_{\substack{ d\in{\Bbb N}\\ d \text{ odd} }}
S_{n,d}, \
\wtit S_{n,\,even}=\bigcup\limits_{\substack{ d\in{\Bbb Z}\\ d \text{ even} }} \wtit S_{n,d},\\
S\wtit S_{n, \text{ mix}} &=& \bigcup\limits_{n_1+n_2=n}( S_{n_1,
\text{ odd}} \times \wtit S_{n_2, \text{ even}}).
\end{eqnarray*}

 \noi {\bf Remark.}  We have taken the
liberty of replacing ``pair'' by ``even'' and ``imp'' by ``odd'' in
\cite{W1}.

\noi By the work of Lusztig \cite{L1} and Waldspurger \cite{W1} we
have a parametrization of the quadratic unipotent characters of
$H_n$ by $S\wtit S_{n, \text{ mix}}$ which generalizes that of the
unipotent characters, given above. This will be clarified in
Lemma \ref{2B} below.

\noi
We note that if $\rho \in {\rm Irr} (W_n)$ there is a symbol of defect $1$
corresponding to $\rho$ (\cite{C}, p.375). By abuse of
notation we will sometimes refer to "the core (or cocore) of $\rho$",
to mean the core (or cocore) of the symbol. The characters in ${\rm Irr} (W_n)$
are also parameterized by pairs of partitions $(\lambda_1, \lambda_2)$ with
$\lambda_1 + \lambda_2=n$, and this will be used in the lemma below.

\noindent We now give the parametrization of the quadratic
unipotent characters of $G_n$ and $H_n$ which we will use in our
description of blocks. We remark that the parametrization by
4-tuples in the case of $G_n$, rather than by pairs of partitions
is crucial for our results.

\noi \begin{lemma}\label{2A} The quadratic unipotent
characters of $G_n$ can be parameterized by 4-tuples $(m_1, m_2,
{\rho}_1, {\rho}_2)$ such that
$$m_1(m_1+1)/2 + m_2(m_2+1)/2 +2N_1+2N_2=n,$$ where $m_1,m_2 \in
{\Bbb N}$ and ${\rho}_i \in {\rm Irr}(W_{N_i})$, $i=1,2$.
\end{lemma}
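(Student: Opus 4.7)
\noi The plan is to convert the alternative description of quadratic unipotent characters recalled above---as constituents of $R^{G_n}_L(1\times{\mathcal E}\times\chi_{(\kappa_1,\kappa_2)})$ with $L=T_1\times T_2\times G_{n_0}$---into the stated $4$-tuple parametrization. The two extra combinatorial ingredients needed are the classification of $2$-cores and the Howlett--Lehrer parametrization of a Harish-Chandra series by irreducible characters of the relative Weyl group.

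\noi First I would recall that the $2$-cores of partitions are exactly the staircase partitions $\delta_m=(m,m-1,\lds,2,1)$ for $m\geq 0$, which have size $m(m+1)/2$. Hence a pair of $2$-cores $(\kappa_1,\kappa_2)$ is the same datum as a pair $(m_1,m_2)\in{\Bbb N}^2$, the $2$-cuspidal character $\chi_{(\kappa_1,\kappa_2)}$ of $G_{n_0}$ is determined by $(m_1,m_2)$, and $n_0=m_1(m_1+1)/2+m_2(m_2+1)/2$. Since each cyclic factor of $T_1$ or $T_2$ has order $q^2-1$ and therefore rank $2$ in $G_n$, the rank equation for $L$ reads $n=2N_1+2N_2+n_0$, which gives exactly the numerical constraint in the lemma.

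\noi Next I would invoke the general theory of Harish-Chandra series. As recalled in the discussion preceding the lemma, for both the general linear and unitary cases the map $R^{G_n}_L$ is Harish-Chandra induction, and the endomorphism algebra of $R^{G_n}_L(1\times{\mathcal E}\times\chi_{(\kappa_1,\kappa_2)})$ is a Hecke algebra of type $W_{N_1}\times W_{N_2}$. By Howlett--Lehrer, applied via the semisimplicity of this Hecke algebra at the relevant parameters, the constituents of the induced representation are in natural bijection with ${\rm Irr}(W_{N_1}\times W_{N_2})={\rm Irr}(W_{N_1})\times{\rm Irr}(W_{N_2})$. Thus each $4$-tuple $(m_1,m_2,\rho_1,\rho_2)$ satisfying the numerical condition yields a well-defined quadratic unipotent character of $G_n$.

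\noi For bijectivity, surjectivity is the statement already recorded above that every quadratic unipotent character of $G_n$ occurs as a constituent of such an induced representation, while injectivity follows from the disjointness of Harish-Chandra series attached to non-isomorphic cuspidal data, combined with the Hecke-algebra parametrization within each series. The main technical input---more a citation than a genuine obstacle---is the appeal to Lusztig \cite{L1} to guarantee both that the pairs $(\kappa_1,\kappa_2)$ exhaust the $2$-cuspidal quadratic unipotent characters of the various $G_{n_0}$ and that the endomorphism algebras take the asserted form; these are precisely the facts summarized in the alternative description above.
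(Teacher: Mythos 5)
Your proposal follows the second of the two arguments the paper actually gives: the paper's primary argument is purely combinatorial (a pair of partitions $(\mu_1,\mu_2)$ with $|\mu_1|+|\mu_2|=n$ is equivalent, via the classical $2$-core/$2$-quotient bijection, to the data of two staircases $\delta_{m_1},\delta_{m_2}$ together with two pairs of partitions, i.e.\ two elements $\rho_i\in{\rm Irr}(W_{N_i})$; this is the argument cited to Waldspurger), and the Harish-Chandra/Hecke-algebra route you take is exactly what the paper offers as ``another proof.'' For $U(n,q)$ your argument is correct and matches the paper's: the tori of order $q^2-1$ are maximally split there, $R^{G_n}_L$ is genuine Harish-Chandra induction, and Howlett--Lehrer plus disjointness of series gives the $4$-tuple parametrization.

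There is, however, a genuine inaccuracy in the $GL(n,q)$ case. You assert that ``for both the general linear and unitary cases the map $R^{G_n}_L$ is Harish-Chandra induction,'' but the paper only claims this for $U(n,q)$ in the alternative description you are using; for $GL(n,q)$ a torus of order $q^2-1$ is the nonsplit (Coxeter) torus of $GL(2,q)$, so $L=T_1\times T_2\times G_{n_0}$ is a $2$-split but not a split Levi, $R^{G_n}_L$ is proper Lusztig induction, and the character $\chi_{(\kappa_1,\kappa_2)}$ is $2$-cuspidal rather than cuspidal. Consequently Howlett--Lehrer and the disjointness of ordinary Harish-Chandra series do not apply as stated; you would need either the generalized $d$-Harish-Chandra theory of Brou\'e--Malle--Michel for $d=2$, or else the combinatorial $2$-core/$2$-quotient bijection (the paper's first argument), or a reduction of the $GL$ case to the $U$ case, which is how the paper phrases its second argument (``This gives another proof of the parametrization \ldots\ for $U(n,q)$, and hence for $GL(n,q)$''). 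The gap is repairable by any of these, but as written the $GL(n,q)$ half of your proof rests on a tool that does not directly apply.
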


\begin {proof} The quadratic unipotent characters of $G_n$ are
parameterized by pairs of partitions $({\mu}_1, {\mu}_2)$ such that
$|{\mu}_1| + |{\mu}_2|=n$. A combinatorial proof that
we may parameterize these
characters of $G_n$ by 4-tuples $(m_1, m_2, {\rho}_1, {\rho}_2)$ as
above is given in (\cite{W2}, p.361). The characters
${\rm Irr}( W_{N_i})$ are also parameterized by pairs of partitions, and so
 we can regard each ${\rho}_i$ as corresponding to a pair of partitions.
Then $\chi_{({\mu}_1, \mu_2)}$ is parameterized by $(m_1, m_2,
{\rho}_1, {\rho}_2)$ where the 2-core of ${\mu}_i$ is
$\lbrace m_i, m_{i-1}, \ldots , 2, 1\rbrace $ and the 2-quotient of
${\mu}_i$ is ${\rho}_i \in {\rm Irr}( W_{N_i})$, $i=1,2$. Here
${\rho}_i$ corresponds to a pair of partitions, as mentioned above.

\noi We note that the character parameterized by $(m_1, m_2, -,
-)$ is 2-cuspidal for $GL(n,q)$. In the case of $U(n,q)$ the description given
above also shows that we can regard this parametrization as coming
from Harish-Chandra induction from a suitable Levi subgroup $L$,
with $(m_1, m_2, -, -)$ the parameters for a cuspidal quadratic
unipotent character of a possibly smaller unitary group $U(n_0,q)$
and with $({\rho}_1, {\rho}_2)$  the character of a  product of two
Hecke algebras of type $B$ corresponding to $W_{N_1} \times
W_{N_2}$. This gives another proof of the parametrization by the
4-tuples as above for $U(n,q)$, and hence for $GL(n,q)$.
\end{proof}

\noi {bf Remark.} For an explanation of the connection between the two parameterizations
of unipotent characters of $U(n,q)$ see also (\cite{FS3}, p.224).

\noi \begin{lemma} \label{2B} The quadratic unipotent characters of
$H_n$ can be parameterized by pairs of symbols $(\Lambda_1,\Lambda_2)$
and by 4-tuples $(h_1, h_2, {\rho}_1,
{\rho}_2)$ such that $h_1(h_1+1) + h_2^2 +N_1+N_2=n$, where $h_1 \in
{\Bbb N}$, $h_2 \in {\bf Z}$ and ${\rho}_i \in {\rm Irr}\
W_{N_i}$, $i=1,2$
\end{lemma}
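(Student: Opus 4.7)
The plan is to derive both parameterizations from material recalled earlier in this section.

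For the parameterization by pairs of symbols, I would use Jordan decomposition combined with the bulleted classification of unipotent characters by symbols above. A quadratic unipotent character of $H_n$ lies in ${\mathcal E}(H_n,(s))$ for some semisimple $s \in H_n^* = SO(2n+1,q)$ with $s^2=1$; its centralizer has the form $SO(2n_1+1,q) \times SO^{\pm}(2n_2,q)$ with $n_1+n_2=n$, and Jordan decomposition identifies the character with a unipotent character of this centralizer. Since unipotent characters of $SO(2n_1+1,q)$ are in bijection with $S_{n_1,\text{odd}}$ and unipotent characters of $O^{\pm}(2n_2,q)$ with $\wtit S_{n_2,\text{even}}$, taking the union over $n_1+n_2=n$ yields the parametrization by $S\wtit S_{n,\text{mix}}$ stated above Lemma \ref{2A}, which is exactly the parametrization by pairs of symbols.

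For the 4-tuple parameterization, I would use the Harish-Chandra description recalled just above Lemma \ref{2A}. Each quadratic unipotent character of $H_n$ is an irreducible constituent of $R^{H_n}_K(1 \times {\mathcal E} \times \chi)$ for some cuspidal quadratic unipotent character $\chi$ of $H_{n_0}$, and the endomorphism algebra of the induced module is a Hecke algebra of type $W_{N_1} \times W_{N_2}$. Under Jordan decomposition $\chi$ corresponds to a cuspidal unipotent character of $SO(2m_1+1,q) \times SO^{\epsilon}(2m_2,q)$ with $m_1+m_2=n_0$; such cuspidal characters exist precisely when $m_1 = h_1(h_1+1)$ for some $h_1 \in {\Bbb N}$ and $m_2 = h_2^2$ for some $h_2 \in {\Bbb Z}$ (the sign of $h_2$ encoding $\epsilon$), and are then unique. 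Thus $(h_1,h_2)$ indexes $\chi$, the constituents of the induced module are indexed by ${\rm Irr}(W_{N_1}) \times {\rm Irr}(W_{N_2})$, and the resulting 4-tuple $(h_1,h_2,\rho_1,\rho_2)$ satisfies $h_1(h_1+1)+h_2^2+N_1+N_2 = n_0+N_1+N_2 = n$.

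To see that the two parameterizations agree, I would match them via $2$-cores and $2$-quotients of symbols. For $\Lambda_1 \in S_{n_1,\text{odd}}$ of defect $2h_1+1$, iterated $2$-hook and $2$-cohook removal produces a unique $2$-core (the stairway symbol of rank $h_1(h_1+1)$, which parameterizes the cuspidal unipotent character of $SO(2h_1(h_1+1)+1,q)$) together with a $2$-quotient identified with $\rho_1 \in {\rm Irr}(W_{N_1})$ via the bipartition model of characters of $W_{N_1}$; analogously, $\Lambda_2 \in \wtit S_{n_2,\text{even}}$ of defect $2h_2$ has a $2$-core of rank $h_2^2$ and a $2$-quotient identified with $\rho_2 \in {\rm Irr}(W_{N_2})$, the sign of $h_2$ recording the ordering. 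The compatibility of the $2$-quotient map with Harish-Chandra induction then forces these two descriptions to index the same characters.

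The main obstacle is the careful bookkeeping in the ordered even-defect case: one must verify that the two cuspidal characters of $SO^{\pm}(2h_2^2,q)$ (for fixed $|h_2|>0$) correspond to the two ordered symbols of defect $\pm 2h_2$, and that this matches the convention of not quotienting $\wtit S_{n_2,\text{even}}$ by the involution $\sigma$, in contrast to $S_{n_1,\text{odd}}$. The odd-defect case is easier because the ambiguity in the sign of the defect is already absorbed by $\Lambda \sim \sigma(\Lambda)$ on $S_{n_1,\text{odd}}$. Once this compatibility is pinned down, both sides are in bijection with the same set of 4-tuples and the rank identity follows.
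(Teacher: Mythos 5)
Your first two paragraphs are essentially the paper's argument (plus a Jordan-decomposition gloss on the symbol side that the paper relegates to a remark): the $4$-tuple parametrization comes from Harish--Chandra induction of the cuspidal quadratic unipotent character indexed by $(h_1,h_2,-,-)$, with the constituents indexed by ${\rm Irr}(W_{N_1})\times{\rm Irr}(W_{N_2})$ via the Hecke algebra of type $W_{N_1}\times W_{N_2}$, and the ranks $h_1(h_1+1)$, $h_2^2$ of the cuspidal cores give the numerical identity. That part is fine.

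The matching step in your third paragraph, however, uses the wrong combinatorics. The dictionary between a symbol $\Lambda_1$ of rank $n_1$ and defect $2h_1+1$ and the pair $(h_1,\rho_1)$ is governed by \emph{$1$-hooks}, not by $2$-hooks and $2$-cohooks: the Levi $K$ is built from tori of order $q-1$, so passing from the cuspidal symbol to a constituent of the induced module adds $N_1$ $1$-hooks, the $1$-core of any symbol of defect $2h_1+1$ is the staircase $(\{0,\dots,2h_1\},\emptyset)$ of rank $h_1(h_1+1)$, and $\rho_1$ is the $1$-quotient (equivalently, $\Lambda_1=(S',T)$ with $S'=[0,d-2]\cup(S+d-1)$, $d=2h_1+1$, where $(S,T)$ is the defect-$1$ symbol of $\rho_1$ --- this is the formula from Lusztig that the paper quotes, and the $1$-core/$1$-quotient language is used again in Theorem \ref{3D} and the remark following it). Iterated $2$-hook/$2$-cohook removal is a different operation (relevant to $\ell$-blocks with $e=2$, and in general not even defect-preserving in the cohook case), and its ``core'' is not the Harish--Chandra cuspidal support; as written, this step would not produce the claimed bijection. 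You have apparently transported the $2$-core/$2$-quotient combinatorics of \emph{partitions} from the $G_n$ case (Lemma \ref{2A}) to symbols, where the analogue is the $1$-core/$1$-quotient. A second, smaller slip: for $h_2\neq 0$ the sign of $h_2$ does not encode $\epsilon$ --- the type $O^{\pm}$ is determined by the parity of $h_2$, while the sign distinguishes the two orderings (defect $+2h_2$ versus $-2h_2$) of the symbol, i.e.\ the two cuspidal unipotent characters of the same group $O^{\epsilon}(2h_2^2,q)$; you state this correctly later in the paragraph, so reconcile the two claims. Since your first two paragraphs already establish each parametrization separately, the lemma as literally stated survives, but the explicit symbol--$4$-tuple correspondence (which is needed downstream) must be repaired as above.
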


\begin{proof} As in the case of $U(n,q)$
this is done by Harish-Chandra induction of cuspidal
quadratic-unipotent characters from a suitable Levi subgroup $K$
(\cite{W1}, 4.9-4.11). The endomorphism algebra of
the induced representation is again isomorphic to a Hecke algebra of type
$W_{N_1} \times W_{N_2}$. Hence the set of quadratic unipotent
characters of $H_n$ is parameterized by 4-tuples $(h_1, h_2,
{\rho}_1, {\rho}_2)$, where the cuspidal character is parameterized
by $(h_1, h_2, -, -)$.  Then (\cite{W1}, 2.21, 4.10) the pair
$(h_1,\rho_1)$ corresponds to a symbol $\Lambda_1 \in S_{h_1 +
h_1^2+N_1,odd}$ and the pair $(h_2,\rho_2)$ corresponds to a symbol
$\Lambda_2 \in \wtit S_{ h_2^2+N_2,even}$. Thus there is a pair
$(\Lambda_1,\Lambda_2)\in S\wtit S_{n, mix}$ corresponding to the
4-tuple $(h_1, h_2, {\rho}_1, {\rho}_2)$, and there is a bijection
of $S\wtit S_{n, mix}$ with the set of quadratic unipotent
characters of $H_n$.

\noi We note here the connection between the symbols $\Lambda_1,\Lambda_2$
and the symbols corresponding to $\rho_1, \rho_2$.  Suppose the symbol corresponding
to $\rho_1$ is $(S,T)$ where $|S|=|T|+1$.  Then the symbol corresponding
to $\Lambda_1$ is $(S',T)$ where, if $ 2h_1+1=d$, $S'=\lbrace [0, d-2]\cup ( S+d-1) \rbrace$
(\cite{L1}, 3.2). The formula for $\rho_2$ and $\Lambda_2$ is similar.
\end{proof}

\noi The quadratic unipotent character parameterized by
$(\Lambda_1,\Lambda_2)$ is denoted by
$\chi_{(\Lambda_1,\Lambda_2)}$.

\noi {\bf Remark.} We note that since $(\Lambda_1,\Lambda_2)\in
S\wtit S_{n, mix}$ the character $\chi_{(\Lambda_1,\Lambda_2)}$ is in ${\mathcal
E}(H_n, (s))$ where the number of  eigenvalues of $s$ equal to $1$
(resp. $-1$) in the natural representation of the dual group
$SO(2n+1)$ is $2 \ {\rm rank} (\Lambda_1)+1$ (resp. $2 \ {\rm rank}
(\Lambda_2)$). The pair $(\Lambda_1,\Lambda_2)$ parameterizes a unipotent character of
the centralizer of $s$ in a group dual to $H_n$, and thus we have the Jordan
decomposition of $\chi_{(\Lambda_1,\Lambda_2)}$. This will be used in Section 5.

\noi The following lemma is a first step towards connecting the quadratic
unipotent characters of the groups $G_n$ and the groups $H_n$.

\noindent\begin{lemma} \label {2C} (\cite{W2}, p.362).
There is a bijection between pairs $(m_1, m_2)$ such that
$m_1(m_1+1)/2 + m_2(m_2+1)/2=n$ and pairs $(h_1, h_2)$ such that
$h_1(h_1+1) + h_2^2=n$. This bijection is defined by $m_1=
sup(h_1+h_2, -h_1-h_2-1)$ and $m_2= sup(h_1-h_2, h_2-h_1-1)$.
\end{lemma}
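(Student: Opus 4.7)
The plan is to reduce the lemma to a ``folding'' bijection on $\mathbb{Z}$.  Change variables by setting $u = h_1+h_2$ and $v = h_1-h_2$, so that $h_1 = (u+v)/2$, $h_2 = (u-v)/2$; the conditions $h_1 \in \mathbb{N}$, $h_2 \in \mathbb{Z}$ become $(u,v) \in \mathbb{Z}^2$ with $u+v \geq 0$ and $u \equiv v \pmod 2$.  Define $\phi \colon \mathbb{Z} \to \mathbb{N}$ by $\phi(k) = \sup(k, -k-1)$.  The map of the lemma then factors as $(u,v) \mapsto (\phi(u), \phi(v))$.

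The quadratic identity becomes immediate in these variables.  For each $k \in \mathbb{Z}$, either $\phi(k) = k$ or $\phi(k) = -k-1$, and in both cases $\phi(k)(\phi(k)+1)/2 = k(k+1)/2$.  Hence
$$\frac{m_1(m_1+1)}{2} + \frac{m_2(m_2+1)}{2} = \frac{u(u+1)}{2} + \frac{v(v+1)}{2} = \frac{u^2+v^2+u+v}{2},$$
and substituting $u = h_1+h_2$, $v = h_1-h_2$ gives $h_1^2 + h_1 + h_2^2 = h_1(h_1+1) + h_2^2$, as required.

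For bijectivity, observe that $\phi$ is two-to-one with fiber $\{k, -k-1\}$ over $\phi(k)$.  Given $(m_1, m_2) \in \mathbb{N}^2$, there are therefore four candidate preimages in $\mathbb{Z}^2$; the candidate $(-m_1-1, -m_2-1)$ satisfies $u+v = -m_1-m_2-2 < 0$ and is ruled out.  When $m_1 \equiv m_2 \pmod 2$ the parity constraint $u \equiv v \pmod 2$ selects only the diagonal $(m_1, m_2)$; when $m_1 \not\equiv m_2 \pmod 2$ it selects one of $(m_1, -m_2-1)$ or $(-m_1-1, m_2)$, and the constraint $u+v \geq 0$ picks out exactly the one corresponding to $m_1 > m_2$ or $m_1 < m_2$ respectively.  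Translating back yields an explicit inverse in each case.

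The main obstacle, which turns out to be mild, is verifying in the mixed-parity case that exactly one of the two off-diagonal preimages satisfies $u+v \geq 0$; this follows since the parity mismatch forces $m_1 \neq m_2$.  The rest of the argument is direct substitution once the three cases have been laid out.
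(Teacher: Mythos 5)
Your proof is correct and complete. The paper itself offers no argument for this lemma — it is simply attributed to Waldspurger (\cite{W2}, p.~362) — so your self-contained verification is a genuine addition rather than a variant of an existing proof. The change of variables $u=h_1+h_2$, $v=h_1-h_2$ together with the single folding map $\phi(k)=\sup(k,-k-1)$ is exactly the right way to see what is going on: the identity $\phi(k)(\phi(k)+1)=k(k+1)$ makes the equality of the two quadratic invariants a one-line computation, and the fiber description $\phi^{-1}(m)=\{m,-m-1\}$ reduces injectivity and surjectivity to the parity and sign bookkeeping you carry out. I checked the three cases: when $m_1\equiv m_2 \pmod 2$ the only admissible preimage is $(u,v)=(m_1,m_2)$, and when the parities differ the constraint $u+v\geq 0$ selects exactly one of $(m_1,-m_2-1)$ and $(-m_1-1,m_2)$ because $m_1\neq m_2$; this matches the examples in the paper (e.g.\ $m_1=2$, $m_2=1$ gives $(u,v)=(2,-2)$, hence $h_1=0$, $h_2=2$, as claimed for the cuspidal characters of $Sp(8,q)$). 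Your argument also makes the Remark following the lemma transparent, since $h_2\mapsto -h_2$ swaps $u$ and $v$ and hence $m_1$ and $m_2$. The only cosmetic point: you should state explicitly at the end that a size-preserving bijection between the full parameter sets restricts to a bijection on each level set $n$, which is the form in which the lemma is stated.
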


\noi {\bf Remark.} Note that if $h_2$ is replaced by $-h_2$, $m_1$ and $m_2$
are interchanged in the above bijection.

\noi
This bijection then leads to the following result, which is crucial
to us. The proof is a straightforward extension of the above lemma.

\noindent \begin{theorem}\label {2D}  The map $(m_1, m_2, {\rho}_1,
{\rho}_2)\rightarrow (h_1, h_2, {\rho}_1, {\rho}_2)$, ${\rho}_i \in
{\rm Irr}\ W_{N_i}$, $i=1,2$  induces a bijection between the  set of
quadratic unipotent characters of $(G_n, n \geq 0) $,  and  the set of quadratic
unipotent characters of $(H_n, n \geq 0) $. Under this
bijection the character corresponding to $(m_1, m_2, {\rho}_1, {\rho}_2)$ of $G_n$ maps to
the character corresponding to $(h_1, h_2, {\rho}_1, {\rho}_2)$ of $H_m$
where  $m_1(m_1+1)/2 + m_2(m_2+1)/2 +
2N_1+2N_2=n$ and $h_1(h_1+1) + h_2^2+N_1+N_2=m$.
\end{theorem}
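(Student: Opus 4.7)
The plan is to deduce the theorem as a bookkeeping corollary of Lemmas \ref{2A}, \ref{2B}, and \ref{2C}: the genuine combinatorial content is already in those three statements, and the job here is to splice them together into a single map.

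First I would invoke Lemma \ref{2A} to identify the set of quadratic unipotent characters of the family $(G_n)_{n\ge 0}$ with the set of 4-tuples $(m_1,m_2,\rho_1,\rho_2)$, where $m_1,m_2\in\mathbb N$, $N_1,N_2\ge 0$, and $\rho_i\in\mathrm{Irr}(W_{N_i})$; the group index $n$ is recovered from the 4-tuple via $n=m_1(m_1+1)/2+m_2(m_2+1)/2+2N_1+2N_2$. In parallel, Lemma \ref{2B} identifies the set of quadratic unipotent characters of the family $(H_m)_{m\ge 0}$ with the set of 4-tuples $(h_1,h_2,\rho_1,\rho_2)$ with $h_1\in\mathbb N$, $h_2\in\mathbb Z$, and $\rho_i\in\mathrm{Irr}(W_{N_i})$, the group index being $m=h_1(h_1+1)+h_2^2+N_1+N_2$.

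Next, for any fixed choice of $(N_1,N_2,\rho_1,\rho_2)$, Lemma \ref{2C} furnishes a bijection between the pairs $(m_1,m_2)$ and the pairs $(h_1,h_2)$ that enter these parametrizations, preserving the common cuspidal rank $m_1(m_1+1)/2+m_2(m_2+1)/2=h_1(h_1+1)+h_2^2$. Taking the product of this numerical bijection with the identity on the $(N_1,N_2,\rho_1,\rho_2)$-factor defines the map of the theorem, and it is a bijection because each factor is. The arithmetic relation then reads $n=\bigl(h_1(h_1+1)+h_2^2\bigr)+2(N_1+N_2)$ and $m=\bigl(h_1(h_1+1)+h_2^2\bigr)+(N_1+N_2)$, exactly matching the constraints in the statement.

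The only real point to watch is the \emph{ordering} of the two components: on the $G$-side the first slot $(m_1,\rho_1)$ corresponds to the Levi factor carrying the trivial character and the second to the factor carrying $\mathcal E$, while on the $H$-side the first slot corresponds to the ``odd-defect'' symbol $\Lambda_1$ and the second to the ``even-defect'' symbol $\Lambda_2$. The Lemma \ref{2C} bijection respects this ordering, as the remark noting that $h_2\mapsto -h_2$ interchanges $m_1$ and $m_2$ confirms, so no accidental identification between the two slots occurs. Beyond this consistency check, there is no obstacle: the theorem is a formal assembly of the three preceding lemmas.
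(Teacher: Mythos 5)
Your proposal is correct and follows essentially the same route as the paper, which simply declares the theorem "a straightforward extension" of Lemma \ref{2C} combined with the 4-tuple parametrizations of Lemmas \ref{2A} and \ref{2B}; you have merely written out the bookkeeping (the product of the $(m_1,m_2)\leftrightarrow(h_1,h_2)$ bijection with the identity on $(\rho_1,\rho_2)$, and the resulting rank identities for $n$ and $m$) that the paper leaves implicit.
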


\noindent {\bf Example}. The group $Sp(4,q)$ has 23 quadratic
unipotent characters (and only 6 unipotent characters). Of these, 14
characters are in bijection with quadratic unipotent characters of
$GL(4,q)$, 8 with those of $GL(3,q)$ and 1 with that of $GL(2,q)$.
The latter is the unipotent cuspidal character $\theta_{10}$, which
is in bijection with the quadratic unipotent (not unipotent)
2-cuspidal character of $GL(2,q)$ parameterized by the pair of
partitions $(1,1)$ or by the $4$-tuple $(1, 1, -, -)$. Here
$m_1=m_2=1, h_1=1, h_2=N_1=N_2=0$.

\noindent {\bf Example}. The group $GL(4,q)$ has 20 quadratic
unipotent characters (and only 5 unipotent characters). Of these, 14
characters are in bijection with quadratic unipotent characters of
$Sp(4,q)$, 4 with those of $Sp(6,q)$ and 2 with those  of $Sp(8,q)$.
The latter are cuspidal quadratic unipotent characters of $Sp(8,q)$
corresponding to cuspidal quadratic unipotent characters of
$O^+(8,q)$ under Jordan decomposition. They are in bijection
with the quadratic unipotent
2-cuspidal characters of $GL(4,q)$ parameterized by the pair of
partitions $(21,1)$.  Here $m_1=2, m_2=1, h_2=2, h_1=N_1=N_2=0$,
or $(1,21)$  with $m_1=1, m_2=2, h_2=-2, h_1=N_1=N_2=0$.

\noi Theorem \ref{2D} can be restated as follows. Let $L_n$ (resp.
$L_n'$) be the category of quadratic unipotent characters of $G_n$
(resp. $H_n$).

\noindent \begin{theorem} \label {2E} There is an isomorphism
(isometry) between the
 groups $\oplus_{n \geq 0}\ K_0(L_n)$ and $\oplus_{n \geq 0}\ K_0(L_n')$
given by mapping the character parameterized by $(m_1, m_2,
{\rho}_1, {\rho}_2)$ to the character parameterized by $(h_1, h_2,
{\rho}_1, {\rho}_2)$, ${\rho}_i \in {\rm Irr}\ W_{N_i}$, $i=1,2$.
\end{theorem}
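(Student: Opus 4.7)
The plan is to deduce Theorem \ref{2E} directly from Theorem \ref{2D} by extending $\Bbb Z$-linearly. First I would recall that $K_0(L_n)$ is the free abelian group on the set of (isomorphism classes of) simple objects of $L_n$, namely on the set of quadratic unipotent characters of $G_n$; similarly $K_0(L_n')$ is free abelian on the set of quadratic unipotent characters of $H_n$. Hence $\bigoplus_{n \geq 0} K_0(L_n)$ and $\bigoplus_{n \geq 0} K_0(L_n')$ are free $\Bbb Z$-modules on the sets of all quadratic unipotent characters of the families $(G_n)_{n\geq 0}$ and $(H_n)_{n\geq 0}$ respectively.

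Next I would invoke Theorem \ref{2D}: the assignment $(m_1,m_2,\rho_1,\rho_2) \mapsto (h_1,h_2,\rho_1,\rho_2)$, with $n$ and $m$ determined by the size constraints $m_1(m_1+1)/2 + m_2(m_2+1)/2 + 2N_1 + 2N_2 = n$ and $h_1(h_1+1) + h_2^2 + N_1 + N_2 = m$, is a bijection between these two basis sets. Extending by $\Bbb Z$-linearity yields a group isomorphism $\Phi \colon \bigoplus_{n \geq 0} K_0(L_n) \to \bigoplus_{n \geq 0} K_0(L_n')$.

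For the isometry claim I would equip each $K_0(L_n)$ and each $K_0(L_n')$ with the standard character inner product, in which the distinct irreducible characters form an orthonormal basis, and then take the orthogonal sum inner product on the direct sums (so that characters coming from different $G_n$'s, respectively different $H_n$'s, are declared orthogonal). Since $\Phi$ sends each basis element of the source bijectively to a basis element of the target, it carries an orthonormal basis to an orthonormal basis, hence is an isometry.

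There is no substantive obstacle here: the entire content of Theorem \ref{2E} is a formal repackaging of Theorem \ref{2D}. The only item to keep in mind is that irreducible characters living in distinct summands (that is, attached to different values of $n$ or $m$) are orthogonal by fiat in the direct sum, matching the usual orthogonality across different finite groups; with this bookkeeping in place the isometry statement is immediate.
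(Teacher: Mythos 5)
Your proposal is correct and matches the paper's intent exactly: the paper gives no separate proof of Theorem \ref{2E}, presenting it simply as a restatement of Theorem \ref{2D}, and your $\Bbb Z$-linear extension of the bijection of basis elements (with the orthogonal-sum inner product making it an isometry) is precisely the formal content being asserted.
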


\section {Quadratic unipotent blocks}

\noi  The $\ell$ - blocks of $G_n$ and of the conformal symplectic
group $CSp(2n,q)$ were classified in \cite {FS1}, \cite {FS2}. We
define a quadratic unipotent block of $G_n$ or $H_n$ to be one which
contains quadratic unipotent characters. As a special case we have
the unipotent blocks, which have been studied by many authors (see e.g.
 \cite{CE}). The quadratic unipotent $\ell$ - blocks of $H_n$ were
classified in terms of cuspidal pairs in \cite {BS}. A description of the
characters in a quadratic unipotent block of $H_n$ was given in
\cite {BS2} if $q > 2n$ .

\noi
The following theorem describes these results.
Here and in the rest of the paper, $e$ is the order of $q$ mod
$\ell$ and $f$ the order of $q^2$ mod $\ell$. The character
$\mathcal E$ of the torus $T_2$ is the product of the characters
of order 2 on each component of $T_2$.

\noi \begin{theorem} \label{3A} (i) \cite {FS1}
Let $\ell$ divide
$q^f+1$ if  $G_n=GL(n,q)$ and let $\ell$ divide $q^f+1$, $f$ even, or
$q^f-1$, $f$ odd, if $G_n=U(n,q)$. Let $B$ be a quadratic unipotent
$\ell$-block of $G_n$. Then $B$ corresponds to a pair $({\lambda}_1,
{\lambda}_2)$ of partitions such that $|\lambda_1| + |\lambda_2|=n'$
and such that ${\lambda}_1$ and ${\lambda}_2$ are $2f$-cores, i.e. no
$2f$-hooks can be removed from them. The quadratic unipotent
characters in $B$ are of the form $\chi_{({\mu}_1, \mu_2)}$ where
${\lambda}_i$ is the $2f$-core of ${\mu}_i$ ($i=1,2$). These
characters are precisely the constituents of $R^{G_n}_L(1 \times
{\mathcal E} \times \chi_{(\lambda_1,\lambda_2)})$, where $L$ is a
Levi subgroup of the form $T_1 \times T_2 \times G_{n'}$, $T_1$
(resp. $T_2$) is a product of $M_1$ (resp. $M _2$) tori of order
$q^{2f}-1$, and $1$ (resp. $\mathcal E$) is the trivial character
(resp. character of order 2) of $T_1$ (resp. $T_2$). The character
${\chi}_{(\lambda_1,\lambda_2)})$ is in a block of defect $0$ of
$G_{n'}$.

\noi (ii) \cite{BS2} Let $q > 2m$. Let $b$ be a quadratic unipotent
$\ell$ - block, i.e. an isolated block of $H_m$ and let $\ell$
divide $q^f-1$, $f$ odd. Then $b$ corresponds to a pair of symbols
$({\pi}_1, {\pi}_2)$ where the ${\pi}_i$ are $f$-cores .
The quadratic unipotent characters in $b$ are of the form
$\chi_{({\Lambda}_1, \Lambda_2)}$ where ${\pi}_i$ is the $f$-core of
${\Lambda}_i$ ($i=1,2$). These characters are precisely the
constituents of $R^{H_m}_K(1 \times {\mathcal E} \times
{\chi}_{({\pi_1},{\pi_2})})$, where $K$ is a Levi subgroup of the
form $T_1 \times T_2 \times H_{m'}$, $T_1$ (resp. $T_2$) is a
product of $M_1$ (resp. $M_2$) tori of order $q^{f}-1$ and $1$
(resp. $\mathcal E$) is the trivial character (resp. character of
order 2) of $T_1$ (resp. $T_2$). The character
${\chi}_{({\pi_1},{\pi_2})}$ is in a block of defect $0$ of
$H_{m'}$.

\noi (iii) \cite{BS2} Let $q > 2m$. Let $b$ be a quadratic unipotent
$\ell$ - block, i.e. an isolated block of $H_m$ and let $\ell$
divide $q^f+1$. Then $b$ corresponds to a pair of symbols $({\pi}_1,
{\pi}_2)$ where the ${\pi}_i$ are $f$-cocores. The quadratic
unipotent characters in $b$ are of the form $\chi_{({\Lambda}_1,
\Lambda_2)}$ where ${\pi}_i$ is the $f$-cocore of ${\Lambda}_i$
($i=1,2$). These characters are precisely the constituents of
$R^{H_m}_K(1 \times {\mathcal E} \times
{\chi}_{({\pi_1},{\pi_2})})$, where $K$ is a Levi subgroup of the
form $T_1 \times T_2 \times H_{m'}$, $T_1$ (resp. $T_2$) is a
product of $M_1$ (resp. $M_2$) tori of order $q^{f}+ 1$ and $1$
(resp. $\mathcal E$) is the trivial character (resp. character of
order 2) of $T_1$ (resp. $T_2$). The character
${\chi}_{({\pi_1},{\pi_2})}$ is in a block of defect $0$ of
$H_{m'}$. $\hfill \square $
\end{theorem}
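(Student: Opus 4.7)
The plan is to organize the proof as a translation of the cited block classifications \cite{FS1}, \cite{BS}, \cite{BS2} into the parametrizations set up in Section~2. In each part the underlying mechanism is the same: the reference attaches to each $\ell$-block a cuspidal pair together with a Hecke-algebra action on an induced representation, and I re-express that cuspidal pair and the associated Levi using the partition (resp.\ symbol) data we already have.

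For part~(i) I would begin from the Jordan-decomposition picture of quadratic unipotent characters: for $s^2=1$ we have $C_{G^*}(s) = G^*_{n_1}\times G^*_{n_2}$, so each $\chi \in {\mathcal E}(G_n,(s))$ corresponds via Jordan decomposition to a unipotent character of a product of two groups of the same type, which is exactly what the label $(\mu_1,\mu_2)$ records. The Fong--Srinivasan block classification \cite{FS1} then distributes these characters into $\ell$-blocks according to the $2f$-cores $(\lambda_1,\lambda_2)$ of $(\mu_1,\mu_2)$; the fact that the relevant combinatorial step is the $2f$-hook (rather than the $f$-hook) comes from the occurrence of the sign character $\mathcal{E}$ on $T_2$, which forces the torus factors to have order $q^{2f}-1$ rather than $q^f-1$. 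The description of the block as the constituents of $R^{G_n}_L(1\times \mathcal{E}\times \chi_{(\lambda_1,\lambda_2)})$ then follows from the $d$-Harish-Chandra formulation of the block: the Levi $L$ is the product of the cuspidal Levi of each factor with the appropriate tori, and the Hecke algebra of type $W_{M_1}\times W_{M_2}$ governs the distribution.

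For parts~(ii) and~(iii) I would run the same argument but with symbols in place of partitions, using Lemma~\ref{2B} to label quadratic unipotent characters of $H_m$ by pairs $(\Lambda_1,\Lambda_2)$. The classification of isolated blocks of $H_m$ in \cite{BS}, \cite{BS2}, valid under the hypothesis $q>2m$, attaches to each block a cuspidal pair $(\pi_1,\pi_2)$ of symbols. The dichotomy between (ii) and (iii) reflects the standard dichotomy in symbol combinatorics: when $\ell\mid q^f-1$ with $f$ odd, the relevant operation is adding and removing $f$-hooks, so $(\pi_1,\pi_2)$ is a pair of $f$-cores; when $\ell\mid q^f+1$, it is adding and removing $f$-cohooks, so $(\pi_1,\pi_2)$ is a pair of $f$-cocores. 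In both cases \cite{BS2} identifies the characters in the block as constituents of $R^{H_m}_K(1\times \mathcal{E}\times \chi_{(\pi_1,\pi_2)})$, with the Hecke algebra of the induction being of type $W_{M_1}\times W_{M_2}$ exactly as in Section~2.

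The main obstacle, as I see it, is bookkeeping rather than content. One has to align the symbol shift recorded in the proof of Lemma~\ref{2B} (relating the symbol for $\rho_i$ to that for $\Lambda_i$) with the core/cocore operations used in the block classification, and verify that the Levi $L$ (respectively $K$) produced by the general block theory coincides with the explicit $T_1\times T_2\times G_{n'}$ (respectively $T_1\times T_2\times H_{m'}$) in the statement, with $T_1$ and $T_2$ having the stated torus orders. Once this dictionary is fixed, each of the three assertions is essentially a rephrasing of the cited theorem in the notation of Section~2.
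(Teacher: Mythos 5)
The paper offers no proof of Theorem \ref{3A}: it is stated as a quotation of the classifications in \cite{FS1} (part (i)) and \cite{BS}, \cite{BS2} (parts (ii), (iii)) and closed with a box. Your outline is therefore essentially the same ``proof'' as the paper's --- a translation of the cited cuspidal-pair classifications into the partition/symbol bookkeeping of Section 2 --- and that is the appropriate level of argument here; your identification of the $f$-core versus $f$-cocore dichotomy in (ii)/(iii) and of the role of $q>2m$ matches the paper's intent. One justification in your sketch of part (i) is wrong, though it does not affect the conclusion: the torus order $q^{2f}-1$ is not forced by the presence of the order-two character $\mathcal E$ on $T_2$. It is forced by the hypothesis on $\ell$, namely that $e=2f$ is the multiplicative order of $q$ modulo $\ell$, so that the minimal torus of $G_n$ containing a nontrivial $\ell$-element is a $GL(1,q^{2f})$ of order $q^{2f}-1$; the same integer $e=2f$ is what makes the block-theoretic combinatorics of \cite{FS1} run on $2f$-hooks for both $\mu_1$ and $\mu_2$. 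The character $\mathcal E$ only distinguishes $T_2$ from $T_1$ and pins down the Lusztig series (equivalently, which eigenvalue of $s$ each torus factor contributes to); it plays no role in determining the torus order, and indeed $T_1$, which carries the trivial character, has the same order $q^{2f}-1$.
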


\noi  The following combinatorial
lemma due to Olsson (\cite{O}, p.235) and to Enguehard
(\cite{E2},5.7 ) will be used to connect blocks of types (ii) and
(iii) in the above theorem.

\noi \begin{lemma} \label{3B}
 Given a symbol $\Lambda$ of rank $n$ and a positive integer
$e$ one can define a symbol $\what{\Lambda}$, called the
$e$-twisting of $\Lambda$ in (\cite{O},p.235) such that there is a
bijection between $e$-cohooks in $\Lambda$ and $e$-hooks in
$\what{\Lambda}$. In particular if $\what{\Lambda}$ is an $e$-core,
i.e. has no $e$-hooks, then $\Lambda$ is an $e$-cocore, i.e. has no
$e$-cohooks.
\end{lemma}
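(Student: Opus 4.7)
The plan is to import the construction of Olsson \cite[p.~235]{O}, also presented in Enguehard \cite[\S 5.7]{E2}, and organize it as a residue-class bookkeeping on beta-sets. First I would write $\Lambda = (S,T)$ and decompose each row by residues modulo $e$: $S = \bigsqcup_{i=0}^{e-1} S_i$ and $T = \bigsqcup_{i=0}^{e-1} T_i$, where $S_i$ and $T_i$ consist of the elements congruent to $i$ modulo $e$. In this language, an $e$-hook of $\Lambda$ is an intra-row, intra-residue move (replace $x \in S_i$ by $x \pm e$ within the same $S_i$, and similarly for $T_i$), while an $e$-cohook is an inter-row, intra-residue move (transfer $x$ between $S_i$ and $T_i$, accompanied by a shift of $\pm e$). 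Thus adding or removing an $e$-hook or an $e$-cohook is entirely controlled by the pair of beta-sets $(S_i, T_i)$ attached to a single residue $i$.

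Next I would invoke Olsson's recipe to define $\what{\Lambda}$ by acting on the residue-class decomposition in a prescribed way, reassembling the pieces so that the roles of ``intra-row'' and ``inter-row'' moves within each residue class are exchanged. The key combinatorial verification, carried out in the two cited references, is that the resulting operation is well-defined on the standard equivalence classes of symbols, preserves the rank $n$, and produces an explicit bijection between individual $e$-cohook moves on $\Lambda$ and individual $e$-hook moves on $\what{\Lambda}$. Granting this, the final assertion of the lemma is immediate: $\what{\Lambda}$ admitting no $e$-hook means, by the bijection, that $\Lambda$ admits no $e$-cohook, i.e.\ $\Lambda$ is an $e$-cocore.

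The main obstacle is the bookkeeping required to verify that the residue-class rearrangement really does produce a valid symbol of the correct rank, compatible with the shift and row-swap equivalences built into the definition of a symbol; this rank-matching calculation is the technical heart of the argument in \cite{O} and \cite{E2}. My proposal is to quote their construction of $\what{\Lambda}$ and its well-definedness directly, and then reap the bijection (and hence the core/cocore consequence) as an immediate byproduct of the residue-class description of hooks and cohooks sketched above.
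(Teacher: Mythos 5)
Your proposal is correct and matches what the paper does: the lemma is stated without proof and attributed directly to Olsson (\cite{O}, p.~235) and Enguehard (\cite{E2}, 5.7), exactly as you propose, with the core/cocore assertion read off as an immediate consequence of the hook--cohook bijection. Your residue-class sketch of why the $e$-twist exchanges intra-row and inter-row moves is a reasonable gloss on Olsson's construction, but the paper itself supplies no more detail than your final paragraph.
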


\noi \begin{corollary} The operation of $e$-twisting is an
involution on the set of quadratic unipotent characters of
$Sp(2n,q)$.
\end{corollary}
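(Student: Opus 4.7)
The plan is to lift the $e$-twisting of Lemma \ref{3B} componentwise to the pairs of symbols $(\Lambda_1,\Lambda_2) \in S\wtit S_{n,\text{mix}}$ parameterizing the quadratic unipotent characters of $Sp(2n,q)$ (Lemma \ref{2B}), namely
\[
(\Lambda_1,\Lambda_2) \longmapsto (\what{\Lambda_1},\what{\Lambda_2}),
\]
and to check that this map (i) stays within $S\wtit S_{n,\text{mix}}$ and (ii) is an involution.

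For (i), I would unpack Olsson's explicit construction (\cite{O}, p.235) of $\what{\Lambda}$ from $\Lambda$ in terms of $\beta$-sets. Since rank is preserved by twisting (Lemma \ref{3B}), the condition $\mathrm{rank}(\Lambda_1)+\mathrm{rank}(\Lambda_2) = n$ is preserved. What needs verification in addition is that each component stays in the correct piece of $S\wtit S_{n,\text{mix}}$: that is, the unordered/ordered structure and the parity of the defect must be respected, so that $\what{\Lambda_1}$ remains unordered of odd defect and $\what{\Lambda_2}$ remains ordered of even defect. This reduces to reading off how Olsson's recipe permutes the two rows of the symbol.

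For (ii), I would verify directly from the definition that the transformation on $\beta$-sets producing $\what{\Lambda}$ from $\Lambda$ is self-inverse, so $\what{\what{\Lambda}} = \Lambda$. Indeed Olsson's construction is of the form ``exchange prescribed blocks of beads between the two rows,'' an operation whose second iterate returns the original configuration; this is also consistent with the fact that applying Lemma \ref{3B} twice sends $e$-cohooks in $\Lambda$ bijectively to $e$-cohooks in $\what{\what{\Lambda}}$. Componentwise twisting on pairs is then trivially an involution, and through the bijection of Lemma \ref{2B} this descends to an involution on the set of quadratic unipotent characters of $Sp(2n,q)$.

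The main obstacle is the bookkeeping in step (i): a careful reading of Olsson's definition to confirm that defect parity and the ordered-versus-unordered distinction of each component are invariant under $e$-twisting. Step (ii), by contrast, follows formally once the twisting is written out explicitly.
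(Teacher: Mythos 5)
Your proposal is correct and follows the same (implicit) route as the paper, which states the corollary without proof as an immediate consequence of Lemma \ref{3B}: Olsson's $e$-twisting merely redistributes the entries of a symbol between its two rows (according to the parity of $\lfloor x/e\rfloor$), so it preserves rank and the parity of the defect --- hence preserves $S\wtit S_{n,\text{mix}}$ componentwise --- and is manifestly self-inverse. The only blemish is your side remark that iterating Lemma \ref{3B} sends $e$-cohooks of $\Lambda$ to $e$-cohooks of $\what{\what{\Lambda}}$: the lemma relates cohooks of $\Lambda$ to hooks of $\what{\Lambda}$, which does not chain with itself in that way, but this remark is not load-bearing since your direct verification from the bead-exchange description is the actual argument.
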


\noi \begin{theorem} \label{3C}
 If $G_n=GL(n,q)$, let $e=2f$ be the order
of $q$ mod $\ell$, so that $\ell$ divides $q^f+1$. (We exclude the
case where $e$ is odd.)
 If $G_n=U(n,q)$ let again $e$ be the order
of $q$ mod $\ell$ and $f$ the order of $q^2$ mod $\ell$. Consider
the two cases: (i) $e=f$ is odd, $\ell$ divides $q^{2f}-1$ and
$q^f-1$, or (ii) $e=2f$ where $f$ is even, i.e. $e \equiv 0 \ ({\rm
mod}\ 4)$ and $\ell$ divides $q^f+1$. The case $e \equiv 2 \ ({\rm
mod}\ 4)$ is excluded. Then the quadratic unipotent blocks of $G_n$
are parameterized by 6-tuples $(m_1, m_2, {\sigma}_1, {\sigma}_2,
M_1,M_2 )$, where ${\sigma}_i \in {\rm Irr}\ W_{N'_i}$, $i=1,2$ with
$fM_1+N_1'=N_1, \ fM_2+N_2'=N_2$, $m_1(m_1+1)/2 + m_2(m_2+1)/2 +
2N_1+2N_2=n$. The  quadratic unipotent characters in a block
parameterized by $(m_1, m_2, {\sigma}_1, {\sigma}_2, M_1,M_2 )$ are
then parameterized by 4-tuples $(m_1, m_2, {\rho}_1, {\rho}_2)$ such
that $( {\rho}_1, {\rho}_2)$ have $({\sigma}_1, {\sigma}_2)$ as
$f$-cores.
\end{theorem}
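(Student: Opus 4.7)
The plan is to translate the block parametrization of Theorem \ref{3A}(i) into the 4-tuple language of Lemma \ref{2A}. By Theorem \ref{3A}(i), applied to $GL(n,q)$ with $\ell\mid q^f+1$ or to $U(n,q)$ with $\ell\mid q^f-1$ ($f$ odd) or $\ell\mid q^f+1$ ($f$ even), a quadratic unipotent $\ell$-block $B$ of $G_n$ corresponds to a pair $(\lambda_1,\lambda_2)$ of $2f$-cores, and the quadratic unipotent characters in $B$ are the $\chi_{(\mu_1,\mu_2)}$ with $\lambda_i$ equal to the $2f$-core of $\mu_i$ for $i=1,2$. Every case of the present theorem falls within the scope of Theorem \ref{3A}(i) once this identification of $f$ is made.

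First I rewrite each $\mu_i$ via Lemma \ref{2A}: its 2-core is $\{m_i,m_i-1,\ldots,1\}$ and its 2-quotient is $\rho_i\in\mathrm{Irr}(W_{N_i})$, where $2N_i=|\mu_i|-m_i(m_i+1)/2$. I apply the same decomposition to $\lambda_i$. Since each $2f$-hook move is a composition of $f$ successive 2-hook moves, stripping $2f$-hooks does not alter the 2-core, so $\lambda_i$ shares the same 2-core $\{m_i,m_i-1,\ldots,1\}$ as $\mu_i$; let $\sigma_i\in\mathrm{Irr}(W_{N'_i})$ denote its 2-quotient.

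The key combinatorial input is the classical bijection between $2f$-hooks of a partition and $f$-hooks of its 2-quotient, taken componentwise on the bipartition (see for instance \cite{O}). Applied to $\mu_i$ and its 2-quotient $\rho_i$, it gives the equivalence: $\lambda_i$ is the $2f$-core of $\mu_i$ if and only if $\sigma_i$ is the $f$-core of $\rho_i$. Writing $M_i=(N_i-N'_i)/f$ for the number of $f$-hooks stripped from $\rho_i$ to reach $\sigma_i$, one sees that the datum $(\lambda_1,\lambda_2)$ is equivalent to the 6-tuple $(m_1,m_2,\sigma_1,\sigma_2,M_1,M_2)$ subject to $fM_i+N'_i=N_i$; the constraint $|\mu_1|+|\mu_2|=n$ becomes $m_1(m_1+1)/2+m_2(m_2+1)/2+2N_1+2N_2=n$ after substituting $|\mu_i|=m_i(m_i+1)/2+2N_i$. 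With this dictionary in place, the characters of $B$ are exactly the 4-tuples $(m_1,m_2,\rho_1,\rho_2)$ for which $\rho_i\in\mathrm{Irr}(W_{N_i})$ has $f$-core $\sigma_i$, as asserted.

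I expect no deep obstacle: the theorem is essentially a change of coordinates, with Theorem \ref{3A}(i) doing the nontrivial representation-theoretic work and the present statement repackaging it under the 2-core/2-quotient parametrization of Lemma \ref{2A}. The one point requiring care is to check that the cases excluded by the hypothesis ($e$ odd for $GL(n,q)$, $e\equiv 2\pmod 4$ for $U(n,q)$) are exactly those in which the block-governing parameter of \cite{FS1} is an $e$-core rather than a $2f$-core, so that the 2-quotient/$f$-hook dictionary above does not line up; this is the reason for their exclusion from the statement.
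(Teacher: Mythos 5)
Your proposal is correct and follows essentially the same route as the paper: both start from Theorem \ref{3A}(i), pass to the 2-core/2-quotient parametrization of Lemma \ref{2A}, use the fact that removing a $2f$-hook amounts to removing $f$ $2$-hooks (so the $2$-core $(m_1,m_2)$ is constant on the block), and identify $(\sigma_1,\sigma_2)$ as the $f$-cores of the $2$-quotients $(\rho_1,\rho_2)$, with $M_i$ recovered from the hook count $fM_i+N_i'=N_i$. Your explicit appeal to the bijection between $2f$-hooks of $\mu_i$ and $f$-hooks of its $2$-quotient is exactly the combinatorial fact the paper invokes more tersely.
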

\begin{proof} We use Theorem \ref{3A} and the construction
of quadratic unipotent characters. Let $B$ be a quadratic unipotent
$\ell$-block of $G_n$.
We have the following configurations, by our choice of $e$.
The block $B$ corresponds to an $e$-split Levi subgroup of $G_n$
which is a product of $M_1 + M_2$ tori of order $q^{2f}-1$ and $G_{n'}$.
Then $G_{n'}$ has a $2$-split Levi subgroup which is a product of
${N_1}' + {N_2}'$ tori of order $q^{2}-1$ and $G_{n_0}$, and finally
$G_{n}$ has a $2$-split Levi subgroup which is a product of
$N_1 + N_2$ tori of order $q^{2}-1$ and $G_{n_0}$.

\noi Then $B$ corresponds to a pair $({\lambda}_1,
{\lambda}_2)$ of  $2f$-cores which parameterize a block of defect $0$
of $G_{n'}$. Suppose the $2$-core
of $({\lambda}_1, {\lambda}_2)$ is $(\kappa_1, \kappa_2)$.
Then $(\kappa_1, \kappa_2)$ is
parameterized by a 4-tuple $(m_1,m_2, -, -)$, where $\kappa_i$ is
the partition $(m_i, m_i-1, \ldots 1)$ for $i=1,2$. Then the
$2f$-core $({\lambda}_1, {\lambda}_2)$ is parameterized by a 4-tuple
$(m_1, m_2, {\sigma}_1, {\sigma}_2)$, where  ${\sigma}_i \in {\rm
Irr}\ W_{N'_i}$, $i=1,2$, and $m_1(m_1+1)/2 + m_2(m_2+1)/2 +
2N_1'+2N_2'=n'$. Since $B$ is parameterized by the pair of the
$e$-split Levi subgroup and the character $({\lambda}_1, {\lambda}_2)$,
we get the parametrization of $B$ by the sextuple
$(m_1, m_2, {\sigma}_1, {\sigma}_2,
M_1,M_2 )$, where ${\sigma}_i \in {\rm Irr}\ W_{N'_i}$, $i=1,2$.

\noi Let $\chi_{(\mu_1, \mu_2)} \in B$. Now
${\lambda_1}$ and ${\lambda_2}$ are obtained from ${\mu}_1$ and
${\mu_2}$ respectively by removing $2f$-hooks. Removing a $2f$-hook
can be achieved by removing $f$ $2$-hooks. Thus all the $({\mu}_1,
\mu_2)$ parameterizing the quadratic unipotent characters in $B$
have the same $2$-core $(\kappa_1, \kappa_2)$. Then  all the 4-tuples parameterizing
the quadratic unipotent characters in $B$ have the form $(m_1, m_2,
{\rho}_1, {\rho}_2)$ such that $m_1(m_1+1)/2 + m_2(m_2+1)/2
+2N_1+2N_2=n$, where ${\rho}_i \in {\rm Irr}W_{N_i}$, $i=1,2$ . In
other words the pair $(m_1, m_2)$ is fixed for all the characters.
We then note (see Lemma \ref{2A}) that $({\sigma}_1, {\sigma}_2)$ are the $2$-quotients of the
partitions $({\lambda}_1, {\lambda}_2)$, and hence $\sigma_1$ and $\sigma_2$ are $f$-cores.
A count of the number of $2$-hooks removed from a pair of partitions
to reach the $2$-core gives
 $$ fM_1+N_1'=N_1, \ fM_2+N_2'=N_2.$$
 This gives the result.
\end{proof}

\noi The proof of the next proposition for the groups $H_m$ and the case
of $\ell$ dividing  $q^f-1$ is similar to the above.

\noi \begin{theorem} \label{3D} Let $q > 2m$.
Let $\ell$ divide $q^f-1$, $f$ odd.
 The quadratic unipotent blocks of $H_m$ are parameterized by
6-tuples $(h_1, h_2, {\sigma}_1, {\sigma}_2, M_1,M_2 )$, where
${\sigma}_i \in {\rm Irr}\ W_{N'_i}$, $i=1,2$ with $fM_1+N_1'=N_1, \
fM_2+N_2'=N_2$, $h_1(h_1+1) + h_2^2 +N_1+N_2=m$. Here the symbols
corresponding to $\sigma_1$ and ${\sigma}_2$ are $f$-cores.
 The quadratic unipotent characters in $b$  are
parameterized by  4-tuples of the form $(h_1, h_2, {\rho}_1,
{\rho}_2)$ where $({\rho}_1, {\rho}_2)$ have $(\sigma_1, \sigma_2)$
as  $f$-cores.
\end{theorem}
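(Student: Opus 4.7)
The plan is to mirror the proof of Theorem~\ref{3C}, replacing the partition combinatorics of $G_n$ with the symbol combinatorics of $H_m$ and Lemma~\ref{2A} with Lemma~\ref{2B}. Let $b$ be a quadratic unipotent $\ell$-block of $H_m$. By Theorem~\ref{3A}(ii), $b$ corresponds to an $e$-split Levi subgroup $K = T_1 \times T_2 \times H_{m'}$, with $T_i$ a product of $M_i$ tori of order $q^f-1$, together with a pair $(\pi_1,\pi_2)$ of $f$-core symbols parameterizing a defect-$0$ block of $H_{m'}$. Inside $H_{m'}$, Lemma~\ref{2B} in turn provides a Harish-Chandra Levi $T'_1 \times T'_2 \times H_{m_0}$ whose torus factors are products of $N'_1$ and $N'_2$ tori of order $q-1$ and whose $H_{m_0}$-part carries a cuspidal quadratic unipotent character parameterized by $(h_1,h_2,-,-)$ with $h_1(h_1+1)+h_2^2=m_0$; a parallel construction applies to $H_m$ itself with data $(N_1,N_2)$.

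Applying Lemma~\ref{2B} to $\chi_{(\pi_1,\pi_2)}$ of $H_{m'}$ produces a 4-tuple $(h_1,h_2,\sigma_1,\sigma_2)$ with $\sigma_i \in {\rm Irr}(W_{N'_i})$ and $h_1(h_1+1)+h_2^2+N'_1+N'_2=m'$. The explicit shift formula of Lemma~\ref{2B} relating $\pi_i$ to the symbol associated with $\sigma_i$ restricts to a bijection between $f$-hooks on the two sides, so the assumption that $\pi_i$ is an $f$-core forces the symbol attached to $\sigma_i$ to be an $f$-core as well. Packaging this information with the Levi data $(M_1,M_2)$ yields the claimed 6-tuple parametrization $(h_1,h_2,\sigma_1,\sigma_2,M_1,M_2)$ of $b$.

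For a character $\chi_{(\Lambda_1,\Lambda_2)} \in b$, Theorem~\ref{3A}(ii) guarantees that $\pi_i$ is the $f$-core of $\Lambda_i$; in particular the cuspidal component $(h_1,h_2)$ of its 4-tuple parameter is the same for every character in $b$. Writing the 4-tuple of $\chi_{(\Lambda_1,\Lambda_2)}$ as $(h_1,h_2,\rho_1,\rho_2)$ with $\rho_i \in {\rm Irr}(W_{N_i})$, the same hook bijection provided by the shift of Lemma~\ref{2B} translates the condition ``$\pi_i$ is the $f$-core of $\Lambda_i$'' into ``the symbol of $\sigma_i$ is the $f$-core of the symbol of $\rho_i$''. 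A count of $f$-hooks removed on each side gives $N_i = fM_i + N'_i$, and combining the rank equations coming from $K$, from the inner Levi inside $H_{m'}$, and from Lemma~\ref{2B} applied to $(\pi_1,\pi_2)$ yields $h_1(h_1+1)+h_2^2+N_1+N_2 = m$.

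The main obstacle is verifying that the shift in Lemma~\ref{2B} sets up an honest hook-preserving bijection between the symbol $\Lambda_i$ and the Weyl-group symbol attached to $\rho_i$, so that cores on one side correspond to cores on the other. Once this combinatorial fact is in hand, the rest of the argument is direct bookkeeping parallel to Theorem~\ref{3C}, using Theorem~\ref{3A}(ii) where that proof invoked Theorem~\ref{3A}(i).
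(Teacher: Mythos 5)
Your proposal is correct and follows essentially the same route as the paper: classify $b$ by its pair of $f$-core symbols via Theorem \ref{3A}(ii), use the explicit shift formula from Lemma \ref{2B} to transfer $f$-hooks (hence $f$-cores) between $\pi_i$, $\Lambda_i$ and the symbols of $\sigma_i$, $\rho_i$, and then do the rank and hook-count bookkeeping; the ``main obstacle'' you isolate is exactly the step the paper handles with the same shift-formula argument.
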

\noi \begin{proof}   Let $b$ be a
quadratic unipotent $\ell$-block of $H_m$ corresponding to a pair of symbols
$(\pi_1,\pi_2)$ which are $f$-cores, as in Theorem \ref{3A}.
 The $1$-core of $(\pi_1,\pi_2)$ is
parameterized by $(h_1, h_2, - , -)$ for some $h_1, h_2$
 and $(\pi_1,\pi_2)$ is parameterized
by $(h_1, h_2, {\sigma}_1, {\sigma}_2 )$, where ${\sigma}_i \in {\rm
Irr}\ W_{N'_i}$, $i=1,2$. To show that ${\sigma}_1$ is an
$f$-core, we can assume $\pi_1 =(S',T)$, and that the symbol
corresponding to ${\sigma}_1$ is $(S,T)$  as in
Lemma \ref{2B}. Using the description given there of the connection
between $S$ and $S'$ it is easy to see that removing an $f$-hook from
$(S',T)$ is equivalent to removing an $f$-hook from $(S,T)$.
Thus $(S',T)$ is an $f$-core if and only if $(S,T)$ is an $f$-core.

\noi Let $\chi_{(\Lambda_1, \Lambda_2)} \in
b$.  Now ${\pi}_1$ and ${\pi_2}$ are obtained from ${\Lambda_1}$ and
${\Lambda_2}$ respectively by removing $f$-hooks . Removing an
$f$-hook can be achieved by removing $f$ $1$-hooks. Thus all the
$({\Lambda}_1, \Lambda_2)$ parameterizing the quadratic unipotent
characters in $b$ have the same $1$-core which is the $1$-core of
$({\pi}_1, {\pi}_2)$.

\noi Furthermore all the 4-tuples parameterizing the quadratic
unipotent characters in $b$ have the form $(h_1, h_2, {\rho}_1,
{\rho}_2)$ such that $h_1(h_1+1) + h_2^2 +N_1+N_2=m$, where
${\rho}_i \in {\rm Irr} \ W_{N_i}$, $i=1,2$ . In other words the
pair $(h_1, h_2)$ is fixed for all the characters. As before we have
$fM_1+N_1'=N_1, \ fM_2+N_2'=N_2$ where $M_1$, $M_2$ are as in Theorem
\ref{3A} (ii). If $\chi_{(\Lambda_1, \Lambda_2)} $ is parameterized by
$(h_1, h_2, {\rho}_1, {\rho}_2)$ then the above arguments
on removing $f$-hooks applied to
${(\Lambda_1, \Lambda_2)} $ and the symbols corresponding to $({\rho}_1, {\rho}_2)$
show that since $(\Lambda_1, \Lambda_2)$ have $({\pi}_1, {\pi}_2)$ as $f$-cores,
$({\rho}_1, {\rho}_2)$ have $({\sigma}_1, {\sigma}_2 )$ as $f$-cores.
\end{proof}

\noi {\bf Remark }. The above arguments show that the pair $({\rho}_1, {\rho}_2)$ can be regarded
as the $1$-quotient of the pair ${(\Lambda_1, \Lambda_2)} $. This is a special case of
the concept of an $e$-quotient of a symbol in (\cite{O}, Lemma 9).

\noi The case of $H_m$ where $\ell$ divides $q^f+1$ will be considered after
proving Lemma \ref{4D} below, since in that case we have to use
cohooks instead of hooks.

\noi {\bf Remark }. The 4-tuple $(m_1, m_2, {\sigma}_1,
{\sigma}_2)$ (resp. $(h_1, h_2, {\sigma}_1, {\sigma}_2))$ can be
regarded as the ``core'' of the block $B$ (resp. $b$), and the pair
$(M_1, M_2)$ can be regarded as the ``weight" of the block.

 \section{Correspondences between blocks}
\noi All blocks in the rest of the paper will be quadratic unipotent blocks.
The parametrization of blocks described in the last section leads to
 the main theorems of this section. The block correspondences
 that we derive will be between a block $B$
parameterized by $(m_1, m_2, {\sigma}_1, {\sigma}_2, M_1, M_2)$ and a
 block $b$ parameterized by $(h_1, h_2, {\sigma}_1, {\sigma}_2, M_1, M_2)$.
 Suppose $fM_1+N_1'=N_1, \ fM_2+N_2'=N_2$.  If $n$ and $m$ are given by
 $$m_1(m_1+1)/2 + m_2(m_2+1)/2 + 2N_1+2N_2=n, \ {\rm and}$$
 $$h_1(h_1+1) + h_2^2 +N_1+N_2=m$$
 then $B$, $b$ are blocks of $G_n$, $H_m$ respectively.
 For such a fixed pair $(n,m)$  we assume $q>2m$ when
we use the combinatorial description of characters in blocks of
 $Sp(2m,q)$ proved in \cite{BS2}.

\noindent \begin{theorem} \label{4A}  Let $\ell | (q^f-1)$, $f$ odd.
Let $B$ be a quadratic unipotent block of $U(n,q)$ parameterized by
$(m_1, m_2, {\sigma}_1, {\sigma}_2, M_1, M_2)$, where  ${\sigma}_1$
and ${\sigma}_2$ are $f$-cores. Let $b$ be the block of $H_m$
parameterized by $(h_1, h_2,{\sigma}_1, {\sigma}_2, M_1, M_2)$.
Here $n$ and $m$ are as above, and  $(m_1, m_2)$ correspond under
Waldspurger's map to the pair $(h_1, h_2)$. Then
$B$ and $b$ correspond in the sense that (i) their defect groups are
isomorphic, and (ii) assuming $q>2m$, there is a natural bijection between the
quadratic unipotent characters in $B$ and those in $b$.
\end{theorem}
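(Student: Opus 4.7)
The plan is to read both statements off from the combinatorial parametrization of blocks in Section 3, combined with the character bijection of Theorem \ref{2D}. The key observation is that Theorems \ref{3C} and \ref{3D} reduce everything on both sides to the same combinatorial data: a fixed "core" piece $(m_1,m_2,\sigma_1,\sigma_2)$ or $(h_1,h_2,\sigma_1,\sigma_2)$, plus a common weight $(M_1,M_2)$ and a common indexing set for the $\rho_i$'s.

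For part (ii), by Theorem \ref{3C} the quadratic unipotent characters in $B$ are exactly those parameterized by 4-tuples $(m_1,m_2,\rho_1,\rho_2)$ with $\rho_i\in\mathrm{Irr}(W_{N_i})$ such that the symbols attached to $(\rho_1,\rho_2)$ have $(\sigma_1,\sigma_2)$ as their pair of $f$-cores. By Theorem \ref{3D}, the quadratic unipotent characters in $b$ are those parameterized by $(h_1,h_2,\rho_1,\rho_2)$ subject to the identical $f$-core condition on the same $\rho_i$'s. I would therefore apply Waldspurger's bijection of Lemma \ref{2C} to send $(m_1,m_2)\mapsto(h_1,h_2)$ while keeping $(\rho_1,\rho_2)$ fixed; this is just the restriction to $B$ of the global bijection of Theorem \ref{2D}. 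To see well-definedness, note that the constraint $fM_i+N_i'=N_i$ is the same on both sides since $(M_1,M_2,\sigma_1,\sigma_2)$ is fixed, and the size equations $m_1(m_1+1)/2+m_2(m_2+1)/2+2N_1+2N_2=n$ and $h_1(h_1+1)+h_2^2+N_1+N_2=m$ are compatible because Waldspurger's pairing gives $m_1(m_1+1)/2+m_2(m_2+1)/2=h_1(h_1+1)+h_2^2$, and $n,m$ are linked by exactly this relation in the hypothesis.

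For part (i), I invoke the Fong--Srinivasan block classification for $U(n,q)$ and its analogue from \cite{BS2} for $H_m$. The defect group of $B$ is a Sylow $\ell$-subgroup of the torus part $T_1\times T_2$ of the $e$-split Levi attached to $B$ in Theorem \ref{3A}(i), where $T_1,T_2$ are products of $M_1,M_2$ tori of order $q^{2f}-1$. Similarly, the defect group of $b$ is a Sylow $\ell$-subgroup of $T_1'\times T_2'$ given by Theorem \ref{3A}(ii), with $M_1,M_2$ tori of order $q^f-1$. Since $\ell$ is odd and $\ell\mid q^f-1$, we have $\ell\nmid q^f+1$, so the $\ell$-parts of $q^{2f}-1=(q^f-1)(q^f+1)$ and of $q^f-1$ coincide. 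Hence both defect groups are abelian of the same shape, namely $(C_{\ell^a})^{M_1+M_2}$ where $\ell^a$ is the $\ell$-part of $q^f-1$. The only non-combinatorial step is this identification of each defect group with a Sylow $\ell$-subgroup of the torus part of the associated Levi, which I would cite from the two block classifications; the rest is bookkeeping, the conceptual point being that once the core data is peeled off, everything is governed by the same weight $(M_1,M_2)$.
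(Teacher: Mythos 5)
Your treatment of part (ii) is essentially the paper's own argument: both read the character sets of $B$ and $b$ off Theorems \ref{3C} and \ref{3D} as 4-tuples sharing the same $f$-core condition on $(\rho_1,\rho_2)$, and transport $(m_1,m_2)\mapsto(h_1,h_2)$ via Lemma \ref{2C} while fixing $(\rho_1,\rho_2)$; your explicit check that the size equations are compatible is a welcome addition the paper leaves implicit.

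Part (i), however, has a gap. You identify the defect group of $B$ with a Sylow $\ell$-subgroup of the torus part $T_1\times T_2$ alone and conclude it is abelian of shape $(C_{\ell^a})^{M_1+M_2}$. That identification is only valid when $\ell$ does not divide the order of the relative Weyl group, i.e.\ roughly when $\ell>\max(M_1,M_2)$ — this is exactly the restriction the paper imposes later, in Section 5, where it does describe the defect group as a Sylow $\ell$-subgroup of $T_1\times T_2$. Theorem \ref{4A} is stated without that restriction, and in general the defect group of $B$ is a Sylow $\ell$-subgroup of $(T_1\rtimes(\mathbf{Z}_{2f}\wr S_{M_1}))\times(T_2\rtimes(\mathbf{Z}_{2f}\wr S_{M_2}))$ (the paper cites \cite{CE}, Theorem 22.9 for this), which is nonabelian as soon as $\ell\le M_1$ or $\ell\le M_2$. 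Your conclusion survives, because the relative Weyl group factors $\mathbf{Z}_{2f}\wr S_{M_i}$ are literally the same on the $U(n,q)$ and $Sp(2m,q)$ sides, and your (correct) observation that the $\ell$-parts of $q^{2f}-1$ and $q^f-1$ coincide when $\ell$ is odd and divides $q^f-1$ then gives an isomorphism of the two semidirect products compatible with the $\ell$-structure; but as written your argument proves the statement only in the abelian-defect case.
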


\begin{proof}
Consider the blocks $B$ and $b$ as above.
We use Theorems \ref{3C} and \ref{3D}.
Suppose a character of $U(n,q)$ in $B$ is parameterized
 by $(m_1, m_2, {\rho}_1,{\rho}_2)$. Then the pair $({\rho}_1,{\rho}_2)$
has $f$-core $({\sigma}_1, {\sigma}_2)$. Then the character of $Sp(2m,q)$ parameterized
by $(h_1, h_2, {\rho}_1,{\rho}_2)$ is in $b$. Thus the correspondence between
the quadratic unipotent  characters in $B$ and those in $b$ is given by
 associating the character in $B$ with parameters $(m_1, m_2, {\rho}_1, {\rho}_2)$  with
the character in $b$ with parameters $(h_1, h_2, {\rho}_1, {\rho}_2)$. This  shows (ii).

\noi For (i), let $L$ be the Levi subgroup of the
form $T_1 \times T_2 \times G_{n'}$ as in Theorem \ref{3A} (i).
Then a defect group of $B$ is isomorphic to an $\ell$-Sylow subgroup
of $(T_1 \rtimes ({\bf Z}_{2f} \wr S_{M_1})) \times (T_2\rtimes
({\bf Z}_{2f} \wr S_{M_2}))$ (see \cite{CE}, Theorem 22.9) for the
unipotent block case, which extends to this case). By considering
the Levi subgroup $K$ of $Sp(2m,q)$ again as in Theorem
\ref{3A}, and noting that $\ell$ divides $q^f-1$, we see that the defect
group of $b$ is isomorphic to the defect group of $B$.
\end{proof}

\noi \begin{corollary} \label{4B} The  map $B \rightarrow b$ as above
gives a bijection of the sets

$\lbrace \ell-{\rm blocks \  of}\
U(n,q), \ \ell |(q^f-1)\ (f \ {\rm odd}) ,\ n\geq 0 \rbrace$ and

$ \lbrace \ell-{\rm blocks \ of} \ Sp(2m,q), \ \ell |(q^f-1)\ (f \ {\rm
odd}) ,\ m \geq 0 \rbrace $.

\noi The blocks $B$ and $b$ correspond as in
(i) and (ii) of the theorem.
\end{corollary}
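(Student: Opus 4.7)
The plan is to derive this as a direct bookkeeping consequence of the parametrizations of blocks given in Theorems \ref{3C} and \ref{3D}, combined with Waldspurger's bijection from Lemma \ref{2C}. First, by Theorem \ref{3C}, the set of all quadratic unipotent $\ell$-blocks of $U(n,q)$ as $n$ varies over ${\Bbb N}$ is indexed by 6-tuples $(m_1, m_2, \sigma_1, \sigma_2, M_1, M_2)$ with $m_1, m_2, M_1, M_2 \in {\Bbb N}$ and $\sigma_i \in {\rm Irr}(W_{N_i'})$ for $i=1,2$; the integer $n$ is then determined by $N_i = fM_i + N_i'$ and $m_1(m_1+1)/2 + m_2(m_2+1)/2 + 2N_1 + 2N_2 = n$. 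Similarly, by Theorem \ref{3D}, the set of all quadratic unipotent $\ell$-blocks of $Sp(2m,q)$ as $m$ varies over ${\Bbb N}$ (with $q > 2m$) is indexed by 6-tuples $(h_1, h_2, \sigma_1, \sigma_2, M_1, M_2)$ with $h_1 \in {\Bbb N}$, $h_2 \in {\Bbb Z}$, the same conditions on $(\sigma_1, \sigma_2, M_1, M_2)$, and $m = h_1(h_1+1) + h_2^2 + N_1 + N_2$.

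Next I would observe that the map $B \rightarrow b$ constructed in Theorem \ref{4A} is exactly the map on parametrizing 6-tuples which fixes the last four coordinates $(\sigma_1, \sigma_2, M_1, M_2)$ and replaces $(m_1, m_2)$ by the pair $(h_1, h_2)$ produced by Waldspurger's bijection of Lemma \ref{2C}. Since that lemma gives a bijection between pairs $(m_1, m_2) \in {\Bbb N}^2$ and pairs $(h_1, h_2) \in {\Bbb N} \times {\Bbb Z}$, and the other four coordinates are preserved, the induced map on 6-tuples is a bijection between the two indexing sets; translating back through the parametrizations yields the bijection of blocks asserted by the corollary. Properties (i) and (ii) for each corresponding pair $(B, b)$ are already the content of Theorem \ref{4A}, so no further verification is needed on individual blocks.

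The only thing to be checked is that both parametrizations are genuinely surjective onto their respective sets of blocks as $n$ (resp. $m$) varies, and that Waldspurger's map accounts for all pairs on both sides. Both facts are already embedded in the cited results, so there is no substantive obstacle; the corollary is purely a matching of indexing sets.
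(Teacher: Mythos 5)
Your proof is correct and follows the route the paper intends: the corollary is stated without a separate proof precisely because it is the bookkeeping you describe, matching the 6-tuple parametrizations of Theorems \ref{3C} and \ref{3D} via the bijection $(m_1,m_2)\leftrightarrow(h_1,h_2)$ of Lemma \ref{2C} while holding $(\sigma_1,\sigma_2,M_1,M_2)$ fixed, with (i) and (ii) already supplied blockwise by Theorem \ref{4A}.
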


In order to consider the case of $GL(n,q)$  we prove the following
lemma.

\medskip\noi \begin {lemma}\label{4C}  There is a natural bijection between

$\lbrace \ell-{\rm blocks \ of}\ U(n,q), \ \ell |(q^f-1) \ (f \ {\rm
odd})\rbrace $ and

$\lbrace \ell-{\rm blocks \ of}\ GL(n,q), \ \ell
|(q^f+1)\ (f \ {\rm odd})\rbrace$, by Ennola Duality.
\end{lemma}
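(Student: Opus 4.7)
The plan is to deduce this directly from Theorem \ref{3C} applied to both sides, and then identify the resulting combinatorial bijection as the one induced by Ennola duality. First I would observe that for $U(n,q)$ with $\ell \mid q^f-1$ and $f$ odd, the order of $q$ mod $\ell$ is $e = f$ (odd), so case (i) of Theorem \ref{3C} gives a parametrization of the quadratic unipotent blocks by 6-tuples $(m_1, m_2, \sigma_1, \sigma_2, M_1, M_2)$ with $\sigma_i \in {\rm Irr}(W_{N_i'})$ being $f$-cores and satisfying the weight-rank relations $fM_i + N_i' = N_i$ and $m_1(m_1+1)/2 + m_2(m_2+1)/2 + 2N_1 + 2N_2 = n$. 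Next, for $GL(n,q)$ with $\ell \mid q^f+1$ and $f$ odd, we have $e = 2f$, and Theorem \ref{3C} again parameterizes the quadratic unipotent blocks by 6-tuples of exactly the same shape, subject to identical constraints.

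The bijection is then defined on parameters by the identity map on 6-tuples, and the content of the lemma is that this identification is the ``natural'' one coming from Ennola duality. To justify this, I would trace through the cuspidal/Levi data underlying Theorem \ref{3A}(i) in both cases. In both situations the relevant $e$-split Levi subgroup of the underlying $G_n$ is a product of $M_1 + M_2$ tori together with a factor $G_{n'}$ carrying a defect-zero character labeled by a pair of $2f$-cores $(\lambda_1, \lambda_2)$; the tori have order $q^{2f}-1$ in the $GL$ case and order $q^{2f}-1$ (split as $(q^f-1)(q^f+1)$ in the appropriate Ennola-dual way) in the $U$ case. Since the $2f$-core combinatorics on partitions is unchanged by the Ennola substitution $q \leftrightarrow -q$ (it only sees $\Phi_{2f}$), both parameterizations produce the same combinatorial objects.

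To package this as Ennola duality, I would cite the known Ennola correspondence for the general linear and unitary series (as used in Fong--Srinivasan and formulated more generally by Broué--Malle--Michel): substituting $q \mapsto -q$ carries rational Lusztig series, cuspidal pairs, and $e$-split Levi data for $GL(n,q)$ with $e = 2f$, $f$ odd, to the corresponding objects for $U(n,q)$ with $e' = f$, and vice versa, preserving the combinatorial labels on both characters and blocks. Under this correspondence, the 6-tuples produced by Theorem \ref{3C} on the two sides are literally the same combinatorial data, giving the claimed bijection.

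The main (and essentially the only) obstacle is verifying that the abstract Ennola correspondence really does match the block parameterizations of Theorem \ref{3C} label-for-label — in particular, that the auxiliary cuspidal characters $\chi_{(\lambda_1,\lambda_2)}$ of the Levi factor $G_{n'}$, and the 2-quotient/$f$-core decomposition used to extract the tuple $(m_1, m_2, \sigma_1, \sigma_2)$, behave compatibly with $q \mapsto -q$. Since this is purely a statement about combinatorics of partitions and degree polynomials in $q$, it reduces to the routine observation that $2f$-hooks, 2-cores, and $f$-cores are invariants of partitions independent of $q$; no genuinely new argument beyond Theorem \ref{3C} and the standard Ennola formalism is required.
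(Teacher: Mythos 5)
Your proposal is correct and follows essentially the same route as the paper: both arguments rest on the observation that the quadratic unipotent blocks on the two sides are classified by the same combinatorial data (the $2f$-cores of Fong--Srinivasan, repackaged as the 6-tuples of Theorem \ref{3C}), so the identity map on labels gives the bijection, with Ennola duality (as in \cite{BMM}, 3.3) supplying the matching of characters parameterized by the same pair $(\mu_1,\mu_2)$. The paper's proof is just a more compressed version of the same argument.
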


\begin {proof}  The sets of quadratic unipotent characters of
$GL(n,q)$ and $U(n,q)$ are in bijection via Ennola Duality, such
that the characters in both groups parameterized by the same pair
$({\mu}_1, {\mu}_2)$ correspond. (See e.g. (\cite{BMM}, 3.3) for the
unipotent case, which extends to our case.) By \cite{FS1} both the
$\ell$- blocks  of $GL(n,q)$, $\ell |(q^f+1)$ ($f$ odd) and
  of $U(n,q)$, $\ell |(q^f-1)$ ($f$  odd) are
classified by $2f$-cores.  Thus in both cases the blocks are
parameterized by 6-tuples $(m_1, m_2, {\sigma}_1, {\sigma}_2, M_1,
M_2)$. The map which makes the blocks of $GL(n,q)$ and $U(n,q)$
which are parameterized by the same 6-tuple correspond is then a
bijection, which also induces a bijection of the quadratic unipotent
characters in the blocks.
\end{proof}

\medskip\noi \begin{lemma} \label{4D} There is a natural bijection between
$\ell$ - blocks of $H_n$ where $\ell | (q^f-1)$,  and $\ell$ -
blocks where $\ell | (q^f+1)$, by $f$-twisting. The quadratic unipotent characters
in corresponding blocks also correspond by $f$-twisting.
Here $f$ is odd.
\end{lemma}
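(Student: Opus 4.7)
The plan is to apply the $f$-twisting operation of Lemma \ref{3B} componentwise to pairs of symbols. By Theorem \ref{3A} (ii), the quadratic unipotent blocks of $H_n$ with $\ell \mid q^f-1$ are parameterized by pairs $(\pi_1,\pi_2)$ of $f$-cores, and by Theorem \ref{3A} (iii), those with $\ell \mid q^f+1$ are parameterized by pairs $(\pi_1',\pi_2')$ of $f$-cocores. The natural candidate for the bijection is $(\pi_1,\pi_2) \mapsto (\what{\pi}_1,\what{\pi}_2)$.

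First I would verify the bijection on the level of blocks. By Lemma \ref{3B}, a symbol $\pi$ is an $f$-core precisely when its twist $\what{\pi}$ is an $f$-cocore, since the operation interchanges $f$-hooks and $f$-cohooks. The corollary that $f$-twisting is an involution then supplies the inverse map, so componentwise twisting gives a bijection between pairs of $f$-cores and pairs of $f$-cocores, with matching ranks.

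Next I would set up the correspondence between quadratic unipotent characters. Let $\chi_{(\Lambda_1,\Lambda_2)}$ lie in the block with core $(\pi_1,\pi_2)$, so that $\pi_i$ is the $f$-core of $\Lambda_i$. The process of successively removing $f$-hooks from $\Lambda_i$ to reach $\pi_i$ can be translated, via the bijection of Lemma \ref{3B} applied iteratively, into successively removing $f$-cohooks from $\what{\Lambda}_i$ to reach $\what{\pi}_i$. Hence $\what{\pi}_i$ is the $f$-cocore of $\what{\Lambda}_i$, and the twisted character $\chi_{(\what{\Lambda}_1,\what{\Lambda}_2)}$ lies in the block parameterized by $(\what{\pi}_1,\what{\pi}_2)$. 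The number of hooks removed equals the number of cohooks removed, so the weight data $(M_1,M_2)$ is preserved.

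The main obstacle is the one technical point at the end of the second paragraph above: that componentwise $f$-twisting is compatible with the iterated removal of $f$-hooks, and not merely with a single-step removal as stated in Lemma \ref{3B}. This reduces to a combinatorial verification from the explicit definition of $e$-twisting in Olsson \cite{O}, using that the $f$-core and $f$-quotient together determine the symbol and that twisting acts in a predictable way on each. Once this compatibility is in hand, the parameter-preserving bijection of blocks and of their quadratic unipotent characters follows formally.
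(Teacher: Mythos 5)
Your proposal follows essentially the same route as the paper: componentwise $f$-twisting of the pair of symbols $(\pi_1,\pi_2)$, using Lemma \ref{3B} and the involution property to match pairs of $f$-cores with pairs of $f$-cocores, and then transporting the characters of the block the same way. The one technical point you flag as the main obstacle --- compatibility of twisting with \emph{iterated} $f$-hook removal, i.e.\ that the $f$-cocore of $\what{\Lambda}$ is the $f$-twist of the $f$-core of $\Lambda$ --- is precisely the statement the paper quotes directly from Olsson (\cite{O}, p.~235), so it is a known result rather than something left to verify.
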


\begin{proof} By Lemma \ref{3B}, if a symbol $\Lambda$ is an $f$-core,
then $\what{\Lambda}$ is an $f$-cocore. The $\ell$ - blocks of $H_n$
where $\ell$ divides $q^f-1$ (resp. $q^f+1$) are classified by
$f$-cores (resp. $f$-cocores). If $b$ is an $\ell$-block where
$\ell$ divides $q^{f}-1$ and $b$ corresponds to a pair $(\pi_1,
\pi_2)$ of $f$-cores, let $b^*$ be the $\ell$-block where $\ell$
divides $q^{f}+1$ which corresponds to the pair
$({\what{\pi_1}},{\what{\pi_2}})$ of $f$-cocores.

\noi  The $f$-core (resp. cocore) of a symbol $\Lambda$ is the $f$-twist
of the $f$-cocore (resp. core) of the symbol $\what{\Lambda}$
(\cite{O}, p.235). Thus there is a
bijection between the quadratic unipotent characters in the blocks
$b$ and $b^*$, again by $f$-twisting.
\end{proof}

\noi                We then get the following theorem, analogous to
Theorem \ref{4A}, by Ennola duality and $f$-twisting.

\medskip\noindent \begin{theorem} \label{4E}  Let $\ell$ divide $q^f+1$, $f$ odd.
Let $B$ be a quadratic unipotent block of $GL(n,q)$  and let $B^*$ be the
block of $U(n,q)$ corresponding to $B$ by Lemma \ref{4C}.  Then  consider
the block $b^*$ of $Sp(2m,q)$ corresponding to $B^*$. By Lemma \ref{4D} $b^*$
corresponds, by $f$-twisting  to an $\ell$-block $b$ of $Sp(2m,q)$
where $\ell$ divides $q^f+1$, $f$ odd. Then  $B$ and $b$ correspond
in the sense that (i) their defect groups are isomorphic, and (ii)
assuming $q>2m$,
there is a natural bijection between the quadratic unipotent
characters in $B$ and those in $b$.
\end{theorem}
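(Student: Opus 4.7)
\noi The plan is to obtain the theorem as the composition of the three bijections already established in this section: Ennola duality (Lemma \ref{4C}), the correspondence for blocks with $\ell|(q^f-1)$ (Theorem \ref{4A}), and $f$-twisting on symbols (Lemma \ref{4D}). Given a quadratic unipotent block $B$ of $GL(n,q)$ with $\ell|(q^f+1)$, Theorem \ref{3C} parametrizes $B$ by a 6-tuple $(m_1,m_2,\sigma_1,\sigma_2,M_1,M_2)$ with $\sigma_1,\sigma_2$ being $f$-cores. Lemma \ref{4C} produces $B^*$, a block of $U(n,q)$ with $\ell|(q^f-1)$, parametrized by the same 6-tuple. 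Theorem \ref{4A} then produces $b^*$, a block of $Sp(2m,q)$ with $\ell|(q^f-1)$, parametrized by $(h_1,h_2,\sigma_1,\sigma_2,M_1,M_2)$, where $(h_1,h_2)$ is the Waldspurger partner of $(m_1,m_2)$ from Lemma \ref{2C}. Finally Lemma \ref{4D} yields $b$, a block of $Sp(2m,q)$ with $\ell|(q^f+1)$, whose parameters are obtained by replacing the $f$-cores $\sigma_1,\sigma_2$ by their $f$-twists $\what\sigma_1,\what\sigma_2$, which are $f$-cocores.

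\noi For assertion (i) I would argue that defect groups are preserved at each step. Ennola duality gives an isomorphism between the defect group descriptions of Theorem \ref{3A}(i) for $B$ and $B^*$, since both are $\ell$-Sylow subgroups of the same wreath product $({\Bbb Z}_{2f}\wr S_{M_1})\times({\Bbb Z}_{2f}\wr S_{M_2})$ acting on isomorphic products of tori of order $q^{2f}-1$. The passage $B^*\to b^*$ preserves defect groups by Theorem \ref{4A}(i). For the final step, $f$-twisting leaves the weight $(M_1,M_2)$ unchanged; the defect group of $b^*$ is an $\ell$-Sylow subgroup of a group of the form $(T_1\rtimes({\Bbb Z}_f\wr S_{M_1}))\times(T_2\rtimes({\Bbb Z}_f\wr S_{M_2}))$ with each $|T_i|$ a power of $q^f-1$, whereas for $b$ the corresponding tori have orders a power of $q^f+1$. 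Since $\ell$ is odd and divides exactly the relevant factor in each situation and the wreath structure is identical, the $\ell$-Sylow subgroups are abstractly isomorphic.

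\noi For assertion (ii) the bijection on characters is the composition of the character bijections provided by each step. A quadratic unipotent character $\chi\in B$ is parametrized by $(m_1,m_2,\rho_1,\rho_2)$ with $(\rho_1,\rho_2)$ having $f$-core $(\sigma_1,\sigma_2)$, by Theorem \ref{3C}. Ennola duality sends it to the character of $U(n,q)$ in $B^*$ with the same parameters; Theorem \ref{4A}(ii) then sends it to the character of $b^*$ parametrized by $(h_1,h_2,\rho_1,\rho_2)$; and finally Lemma \ref{4D} sends this to the character of $b$ parametrized by $(h_1,h_2,\what\rho_1,\what\rho_2)$, where $\what\rho_i$ denotes the $f$-twist applied to the symbol of $\rho_i$. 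The $f$-core condition on $(\rho_1,\rho_2)$ becomes the $f$-cocore condition on $(\what\rho_1,\what\rho_2)$, so the resulting character indeed lies in $b$.

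\noi The main obstacle I anticipate is the bookkeeping required to check that the three bijections compose consistently on the 6-tuple parameters; in particular one must verify that $f$-twisting commutes with the extraction of the $1$-quotient used in Lemma \ref{2B}, so that it genuinely fixes the pair $(h_1,h_2)$ and the weight $(M_1,M_2)$ and acts only on the combinatorial ``quotient'' $(\rho_1,\rho_2)$. Once this verification is carried out, both assertions follow immediately from the corresponding parts of Theorem \ref{4A} together with Lemmas \ref{4C} and \ref{4D}.
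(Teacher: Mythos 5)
Your proposal follows exactly the route the paper takes: Theorem \ref{4E} is obtained by composing Ennola duality (Lemma \ref{4C}), the $\ell\,|\,(q^f-1)$ correspondence of Theorem \ref{4A}, and $f$-twisting (Lemma \ref{4D}), with (i) checked by comparing $\ell$-Sylow subgroups of the relevant products of tori and (ii) by composing the character bijections on the $4$-tuple parameters. You in fact supply more detail than the paper, which simply asserts the result is ``analogous to Theorem \ref{4A}, by Ennola duality and $f$-twisting''; the compatibility of $f$-twisting with the quotient data that you flag is likewise left implicit there.
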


\noi We now have the following corollary.

\medskip\noi \begin {corollary}\label{4F}

 The above map then gives a bijection of the sets

 $\lbrace
\ell-{\rm blocks \ of}\ GL(n,q), \ell |(q^f+1)\ (f \ {\rm odd}) ,
n\geq 0 \rbrace$ and

$\lbrace \ell-{\rm blocks \ of} \
Sp(2m,q), \ell |(q^f+1)\ (f \ {\rm odd}) , m \geq 0 \rbrace$,

\noi satisfying (i) and (ii) of the theorem.
\end{corollary}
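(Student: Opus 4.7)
The plan is to obtain the bijection as the composition of the three previously established bijections already implicit in Theorem \ref{4E}. Specifically, Lemma \ref{4C} furnishes, for each $n \geq 0$, an Ennola duality bijection $B \leftrightarrow B^*$ between $\ell$-blocks of $GL(n,q)$ with $\ell | (q^f+1)$ and $\ell$-blocks of $U(n,q)$ with $\ell | (q^f-1)$. Corollary \ref{4B} then gives a bijection $B^* \leftrightarrow b^*$ between such $\ell$-blocks of $U(n,q)$ (over all $n \geq 0$) and $\ell$-blocks of $Sp(2m,q)$ with $\ell | (q^f-1)$ (over all $m \geq 0$). Finally, Lemma \ref{4D} supplies the $f$-twisting bijection $b^* \leftrightarrow b$ between $\ell$-blocks of $Sp(2m,q)$ with $\ell | (q^f-1)$ and $\ell$-blocks with $\ell | (q^f+1)$. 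Composing the three yields exactly the map $B \mapsto b$ constructed in Theorem \ref{4E}, and since each factor is a bijection between the indicated sets, so is the composite.

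Next I would verify that conditions (i) and (ii) of Theorem \ref{4E} propagate along the chain. For (i), Ennola duality does not alter the combinatorial data controlling the defect group (both sides are indexed by the same 6-tuple), the middle step preserves defect groups by construction in Corollary \ref{4B}, and $f$-twisting only exchanges the roles of tori of order $q^f-1$ and $q^f+1$ inside the governing Levi subgroup; in each case $\ell$ divides the relevant cyclic factor to the same power, so the wreath-product Sylow $\ell$-subgroup is the same up to isomorphism. For (ii), each of the three intermediate maps restricts to a bijection of quadratic unipotent characters within corresponding blocks, as explicitly asserted in Lemma \ref{4C}, Corollary \ref{4B} (under the hypothesis $q > 2m$), and Lemma \ref{4D}.

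The main obstacle, and the only point requiring care, is bookkeeping the 6-tuple parametrization of blocks from Section 3 so that no data is lost under the composition. Concretely, I would trace that Lemmas \ref{4C} and \ref{4D} each preserve the parameters $(\sigma_1,\sigma_2,M_1,M_2)$ (together with the core data $(m_1,m_2)$ and $(h_1,h_2)$ respectively), while Corollary \ref{4B} matches the $GL/U$-side sextuple $(m_1,m_2,\sigma_1,\sigma_2,M_1,M_2)$ with the $Sp$-side sextuple $(h_1,h_2,\sigma_1,\sigma_2,M_1,M_2)$ via Waldspurger's bijection $(m_1,m_2) \leftrightarrow (h_1,h_2)$ from Lemma \ref{2C}. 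Once this compatibility is in hand, the composite is precisely the correspondence described in Theorem \ref{4E}, and the corollary follows.
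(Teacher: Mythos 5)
Your proposal is correct and follows exactly the route the paper intends: Corollary \ref{4F} is stated as an immediate consequence of Theorem \ref{4E}, whose map $B\mapsto b$ is itself built as the composite of Lemma \ref{4C} (Ennola duality), Corollary \ref{4B}, and Lemma \ref{4D} ($f$-twisting), so aggregating over all $n\geq 0$ and $m\geq 0$ gives the bijection with properties (i) and (ii). Your additional bookkeeping of the 6-tuple parameters and the propagation of (i) and (ii) along the chain is a faithful elaboration of what the paper leaves implicit.
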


\medskip\noi We now consider the case where  $\ell$ divides $q^f+1$.
where $e=2f$, $f$ even, so that $e \equiv 0 \ ({\rm mod}\ 4)$.

\medskip\noi \begin{theorem} \label{4G} Let $\ell$ divide   $q^f+1$, $f$ even.
Let $B$ be a quadratic unipotent block of $G_n$ parameterized by
$(m_1, m_2, {\sigma}_1, {\sigma}_2, M_1, M_2)$. Then there is a
block $b$ of $H_m$ such that $B$ and $b$ correspond in the sense
that (i) their defect groups are isomorphic, and (ii)
assuming $q>2m$, there is a
natural bijection between the quadratic unipotent characters in $B$
and those in $b$.
\end{theorem}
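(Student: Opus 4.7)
The plan is to parallel Theorem \ref{4A}, replacing ``$f$-core'' by ``$f$-cocore'' on the symplectic side and using the $f$-twisting of Lemma \ref{3B} to pass between the two. By Theorem \ref{3C}, the block $B$ is parameterized by $(m_1, m_2, \sigma_1, \sigma_2, M_1, M_2)$ with the symbols attached to $\sigma_1, \sigma_2$ being $f$-cores. I first produce the candidate block $b$: let $(h_1, h_2)$ be the image of $(m_1, m_2)$ under Waldspurger's bijection (Lemma \ref{2C}), and let $\what\sigma_i \in {\rm Irr}(W_{N_i'})$ be the character whose symbol is the $f$-twist of the symbol of $\sigma_i$; by Lemma \ref{3B} this symbol is an $f$-cocore of the same rank. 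Define $b$ to be the block of $H_m$ parameterized by $(h_1, h_2, \what\sigma_1, \what\sigma_2, M_1, M_2)$, with $m$ determined by $h_1(h_1+1)+h_2^2+N_1+N_2=m$.

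Before this is meaningful one needs the cocore analog of Theorem \ref{3D}: for $\ell \mid q^f+1$, the quadratic unipotent blocks of $H_m$ are parameterized by 6-tuples $(h_1, h_2, \tau_1, \tau_2, M_1, M_2)$ with the symbols of $\tau_i$ being $f$-cocores and $fM_1+N_1'=N_1$, $fM_2+N_2'=N_2$, and the characters in such a block have parameters $(h_1, h_2, \rho_1, \rho_2)$ whose symbols have $f$-cocore equal to the symbol of $\tau_i$. I will prove this exactly as Theorem \ref{3D}, with every hook-removal step replaced by a cohook-removal step: by Lemma \ref{3B}, $f$-twisting is a rank-preserving bijection on symbols sending $f$-cohooks to $f$-hooks, so the counting argument of Theorem \ref{3D} transports after twisting. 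Granted this, part (ii) follows by associating to a character in $B$ with parameters $(m_1, m_2, \rho_1, \rho_2)$ (whose symbols have $f$-core equal to the symbol of $\sigma_i$) the character in $b$ with parameters $(h_1, h_2, \what\rho_1, \what\rho_2)$, where $\what\rho_i$ is the $f$-twist of $\rho_i$. Lemma \ref{3B} ensures that the $f$-cocore of the symbol of $\what\rho_i$ is the $f$-twist of the symbol of $\sigma_i$, namely the symbol of $\what\sigma_i$, so the resulting character indeed lies in $b$, and the inverse twist furnishes the reverse map.

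For (i), Theorem \ref{3A}(i) describes a defect group of $B$ as an $\ell$-Sylow subgroup of $(T_1 \rtimes ({\bf Z}_{2f} \wr S_{M_1})) \times (T_2 \rtimes ({\bf Z}_{2f} \wr S_{M_2}))$, with each $T_i$ a product of $M_i$ tori of order $q^{2f}-1$, while Theorem \ref{3A}(iii) describes a defect group of $b$ via an analogous construction with tori of order $q^f+1$. Since $q^{2f}-1 = (q^f-1)(q^f+1)$, $\ell \mid q^f+1$, and $\ell \nmid q^f-1$ (because $e = 2f$), the $\ell$-parts of $q^{2f}-1$ and $q^f+1$ coincide; together with the identical wreath factors this yields the isomorphism of defect groups. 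The principal obstacle in the whole plan is establishing the cocore analog of Theorem \ref{3D}, but since the $f$-twisting involution of Lemma \ref{3B} is rank-preserving and interchanges $f$-hooks with $f$-cohooks, the combinatorial bookkeeping reduces directly to the core case already handled.
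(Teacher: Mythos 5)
Your identification of the candidate block $b$ and your defect-group argument for (i) are consistent with the paper, but the route you take to (ii) has a genuine gap: the ``cocore analog of Theorem \ref{3D}'' that you propose is false as stated. Removing an $f$-cohook from a symbol changes its defect by $2$, and since $h_1$ (resp.\ $h_2$) is read off from the defect of $\Lambda_1$ (resp.\ $\Lambda_2$), the characters $\chi_{(\Lambda_1,\Lambda_2)}$ lying in a single $\ell$-block of $H_m$ with $\ell\mid q^f+1$ do \emph{not} share a common pair $(h_1,h_2)$: such a block spreads over several Harish-Chandra series (for $\ell\mid q^f+1$ a block of positive defect typically contains a cuspidal character together with principal series ones). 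Consequently ``the block of $H_m$ parameterized by $(h_1,h_2,\widehat\sigma_1,\widehat\sigma_2,M_1,M_2)$'' is not defined by your scheme, and the character you attach to $(h_1,h_2,\widehat\rho_1,\widehat\rho_2)$ is not shown to lie in $b$: block membership is governed by the $f$-cocore of the full symbols $\Lambda_i$, not of the defect-one symbols of the $\rho_i$, and the passage between the two (the padding $S'=[0,d-2]\cup(S+d-1)$ of Lemma \ref{2B}) depends on the defect $d$, which cohook operations do not preserve. Your appeal to ``the counting argument of Theorem \ref{3D} transports after twisting'' only yields statements about the twisted symbols, not about the intrinsic Harish-Chandra parameters of the untwisted characters.

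The paper avoids this by twisting whole symbols rather than the $\rho_i$. It first carries $B$ to the $f$-core configuration $(h_1,h_2,\sigma_1,\sigma_2,M_1,M_2)$ of $H_m$ exactly as in Theorem \ref{4A} (only hooks are involved, so Theorem \ref{3D} applies), producing the cuspidal pair $(\pi_1,\pi_2)$ of $f$-core symbols and the set of quadratic unipotent constituents of $R^{H_m}_K(1\times{\mathcal E}\times\chi_{(\pi_1,\pi_2)})$; it then applies Olsson's $f$-twisting to the symbols $(\Lambda_1,\Lambda_2)$ of those constituents, which by Lemma \ref{3B} and Lemma \ref{4D} is a bijection exchanging $f$-hooks with $f$-cohooks and hence maps that set onto the set of quadratic unipotent characters of the block $b$ attached to the $f$-cocores $(\tau_1,\tau_2)=(\widehat{\pi_1},\widehat{\pi_2})$. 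If you reroute your part (ii) through this two-step composition (a Theorem \ref{4A}-style bijection followed by symbol-level twisting), the argument closes; as written it does not.
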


\begin{proof}  The quadratic unipotent characters in $B$ are the
constituents of $R^{G_n}_L(1 \times {\mathcal E} \times
\chi_{(\lambda_1,\lambda_2)})$, where $L$ is a Levi subgroup of the
form $T_1 \times T_2 \times G_{n'}$, $T_1$ (resp. $T_2$) is a
product of $M_1$ (resp. $M _2$) tori of order $q^{2f}-1$, and $1$
(resp. $\mathcal E$) is the trivial character (resp. character of
order 2) of $T_1$ (resp. $T_2$). Here the pair of partitions
$(\lambda_1,\lambda_2)$ corresponds to $(m_1, m_2, {\sigma}_1,
{\sigma}_2)$ where $({\sigma}_1, {\sigma}_2)$ are $f$-cores, and we
have a character $\chi_{(\pi_1, \pi_2)}$
of a group $H_{m'}$ corresponding to $(h_1, h_2,
{\sigma}_1, {\sigma}_2)$. By the proof of Theorem \ref{3D}, $(\pi_1, \pi_2)$
are $f$-cores since $({\sigma}_1, {\sigma}_2)$ are $f$-cores.
The character obtained from $\chi_{(\pi_1, \pi_2)}$ by
$f$-twisting is of the form $\chi_{(\tau_1, \tau_2)}$, where the
symbols $\tau_1, \tau_2$ are $f$-cocores. Let $b$ be the
$\ell$-block of a group $H_m$ corresponding to this character and $M_1, M_2$,
i.e. the block $b$ such that the  quadratic unipotent
characters  in it are constituents of $R^{H_m}_K(1 \times {\mathcal
E} \times \chi_{(\tau_1,\tau_2)})$, where $K$ is a Levi subgroup of
the form $T_1 \times T_2 \times H_{m'}$, $T_1$ (resp. $T_2$) is a
product of $M_1$ (resp. $M _2$) tori of order $q^{f}+1$, and $1$
(resp. $\mathcal E$) is the trivial character (resp. character of
order 2) of $T_1$ (resp. $T_2$) (Theorem \ref{3A}(iii)).
Then $B$ and $b$ correspond as required: For (i) the proof is as in
Theorem \ref{4A}. For (ii) we note that there is a bijection by
$f$-twisting between the quadratic unipotent
constituents of $R^{H_m}_K(1 \times {\mathcal
E} \times \chi_{(\tau_1,\tau_2)})$ and those of $R^{H_m}_K(1 \times {\mathcal
E} \times \chi_{(\pi_1, \pi_2)}$ (\cite{O}, p.235). However,
the  quadratic unipotent constituents of the latter are in bijection
with the quadratic unipotent characters in $B$, since $({\sigma}_1, {\sigma}_2)$
are the $2$-quotients of $(\lambda_1,\lambda_2)$. This proves the result.
\end{proof}

\noi Summarizing, we have bijections between the following:

(i) $\lbrace \ell-{\rm blocks \ of}\ U(n,q),\ \ell |(q^f-1)\ (f \ {\rm
odd}) , n\geq 0 \rbrace $ and

$\lbrace \ell-{\rm blocks
\ of} \ Sp(2m,q), \ \ell |(q^f-1)\ (f \ {\rm odd}) , m \geq 0
\rbrace$.

(ii) $\lbrace \ell-{\rm blocks \ of}\ GL(n,q),\ \ell |(q^f+1)\
(f \ {\rm odd}) , n\geq 0 \rbrace $ and

$\lbrace \ell-{\rm blocks
\ of} \ Sp(2m,q), \ \ell |(q^f+1)\ (f \ {\rm odd}) , m \geq 0
\rbrace$.

(iii)
$\lbrace \ell-{\rm blocks \ of}\ U(n,q),\ \ell |(q^f+1)\ (f \ {\rm
even}) , n\geq 0 \rbrace $ and

$\lbrace \ell-{\rm blocks
\ of} \ Sp(2m,q), \ \ell |(q^f+1)\ (f \ {\rm even}) , m \geq 0
\rbrace$.

(iv)
$\lbrace \ell-{\rm blocks \ of}\ GL(n,q),\ \ell |(q^f+1)\ (f \ {\rm
even}) , n\geq 0 \rbrace $ and

$\lbrace \ell-{\rm blocks
\ of} \ Sp(2m,q), \ \ell |(q^f+1)\ (f \ {\rm even}) , m \geq 0 \rbrace$.

\section{Perfect Isometries}

\noi In this section we consider blocks $B$ and $b$
of a pair $G_n$ and $H_m$ which correspond as in Section $4$.
In that case we assume that $\ell$ does not divide the order
of the Weyl groups of $G_n$ and $H_m$, and thus that $\ell$
is large in the sense of (\cite{BMM}, 5.1). This implies that
the blocks considered have abelian defect groups.

\noindent
We generalize the result on perfect isometries
between unipotent blocks of \cite{BMM} to quadratic unipotent blocks.
We use the classification of blocks by $e$-cuspidal pairs
and the description of characters in the blocks
(\cite{CE}, 22.9; \cite{BS}, 3.9, \cite{BS2}, Section 7).

\noi We first describe the defect groups and their
normalizers  of the blocks under consideration (\cite{BMM}, pp.46,50).

\noi
Case 1. $G=G_n$. Let $B$ be a block of $G$ as in Section 3, so that
$\ell$ divides $q^{2f}-1$. Let $L$ be a Levi subgroup of the form
$T_1 \times T_2 \times G_r$, where $T_1$ (resp. $T_2$) is a
product of $M_1$ (resp. $M_2$) tori of order $q^{2f}-1$. The defect
group of $B$ is then  a Sylow $\ell$-subgroup of $T_1 \times T_2$.

\noi Case 2. $G=H_n$. Let $b$ be a block of $G$ as in Section 3,
so that $\ell$ divides $q^{f}-1$ or $q^{f}+1$ . Let $L$ be a Levi
subgroup of the form $T_1 \times T_2 \times H_r$, where $T_1$
(resp. $T_2$) is a product of $M_1$ (resp. $M_2$) tori of order
$q^{f}-1$ or $q^{f}+1$. The defect group of $b$ is then a Sylow
$\ell$-subgroup of $T_1 \times T_2$.

\noi We note that the defect groups of  two blocks $B$ and $b$ which
correspond as in Section 4 are isomorphic.

\noi In each case, we have $W_G(L)=N_G(L)/L \cong {\bf Z}_{2f} \wr
S_{M_1+M_2}$, where $S_N$ is the symmetric group of degree $N$. Now
suppose $\lambda$ is a quadratic unipotent $2f$-cuspidal character
(resp. $f$-cuspidal character) of $G_r$ (resp. $H_r$).  Then
we have in each case $W_G(L, \lambda)=N_G(L, \lambda)/L = W_1 \times W_2$
where $ W_1 \cong {\bf Z}_{2f} \wr
S_{M_1}$ and $W_2 \cong {\bf Z}_{2f} \wr S_{M_2}$.

\noindent The results of Brou\'e, Malle and Michel (\cite {BMM}, 3.2,
5.15) can be modified as follows.

\noi \begin {theorem} \label{5A}  Let $G=G_n$
or $H_n$ and $L$ a Levi subgroup of $G$  as in Case 1 or Case 2 above.
 Let $\lambda$ be a quadratic unipotent
 character of $L$ of the form $1 \times
{\mathcal E} \times \chi$, where $1$ is a trivial character
(resp. character of order 2) of $T_1$ (resp. $T_2$), and
${\chi}$ is in a block of defect $0$ of $G_r$ or $H_r$, so that
 $(L,\lambda)$ is an $e$-cuspidal pair in Case 1 and
 an $f$-cuspidal pair in Case 2.

\noi Let $M$ be an $2f$-split Levi subgroup containing $L$ in Case 1
 or an $f$-split or $2f$-split Levi subgroup containing $L$ in Case 2.
 We then have an isometry  $I^M_{(L, \lambda)}$ between
 the ${\bf Z}$-spans of the set ${\rm Irr}(W_M(L,\lambda))$ and of
 the set of constituents of $R_L^M(\lambda)$,   such that
 $ R_M^G \ . \ I^M_{(L, \lambda)} = I^G_{(L, \lambda)}\ . \ {\rm Ind}^{W_G(L,
 \lambda)}_{W_M(L, \lambda)}$.
\end{theorem}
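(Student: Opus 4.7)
The plan is to follow the strategy of Brou\'e-Malle-Michel (\cite{BMM}, 3.2, 5.15), which treats the purely unipotent case, and adapt it to the quadratic unipotent setting. The key input is that the endomorphism algebra of $R_L^M(\lambda)$ is, as a ${\bf C}$-algebra, isomorphic to the group algebra of $W_M(L, \lambda)$, so that its simple modules are canonically labeled by ${\rm Irr}(W_M(L, \lambda))$.

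First I would establish the Hecke algebra description. By the work of Lusztig \cite{L1} and Waldspurger \cite{W1} (together with the $G_n$ analogue described in Section 2 and used in Lemmas \ref{2A}, \ref{2B}), the endomorphism algebra ${\rm End}_G(R_L^M(\lambda))$ is isomorphic to an Iwahori-Hecke algebra of type $W_M(L,\lambda) \cong W_1^M \times W_2^M$, where each $W_i^M$ is of the form ${\bf Z}_{2f} \wr S_{M_i'}$ with $M_i' \leq M_i$ determined by $M$. The hypothesis that $\ell$ is larger than the orders of the Weyl groups of $G_n$, $H_m$ makes this Hecke algebra semisimple with the same character table as ${\bf Z}[W_M(L, \lambda)]$, giving a canonical bijection $\rho \leftrightarrow \chi_\rho$ between ${\rm Irr}(W_M(L, \lambda))$ and the irreducible constituents of $R_L^M(\lambda)$.

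Next I would define $I^M_{(L, \lambda)}$ by $\rho \mapsto \epsilon_\rho \chi_\rho$, where the signs $\epsilon_\rho \in \{\pm 1\}$ are chosen as in the construction of quadratic unipotent characters in Section 2, so that the constituents appear as honest rather than virtual characters. The isometry property is then immediate from the orthogonality of ${\rm Irr}(W_M(L, \lambda))$ and that of the $\chi_\rho$. The presence of the order-two character $\mathcal{E}$ on $T_2$ only twists the cuspidal input; it does not disturb the Hecke algebra structure, because $\mathcal{E}$ is stabilized by $W_M(L, \lambda)$ and the two factors $W_1^M, W_2^M$ act on the trivial and $\mathcal{E}$-parts of $\lambda$ separately.

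Finally, for the transitivity $R_M^G \cdot I^M_{(L, \lambda)} = I^G_{(L, \lambda)} \cdot {\rm Ind}^{W_G(L, \lambda)}_{W_M(L, \lambda)}$, I would combine transitivity of Lusztig induction $R_M^G \circ R_L^M = R_L^G$ with the compatibility between the Hecke algebra inclusion $\mathcal{H}(W_M(L, \lambda)) \hookrightarrow \mathcal{H}(W_G(L, \lambda))$ and the group algebra inclusion ${\bf Z}[W_M(L, \lambda)] \hookrightarrow {\bf Z}[W_G(L, \lambda)]$, which translates parabolic induction of Hecke modules into ordinary induction of $W$-characters. The main obstacle is this last compatibility: in the unipotent case it is built into the Howlett-Lehrer/BMM framework, and one must verify that the quadratic twist by $\mathcal{E}$ does not destroy it. This goes through because $W_M(L, \lambda)$ decomposes as $W_1^M \times W_2^M$ along the two eigenvalues $\pm 1$ of the semisimple parameter $s$ associated to $\lambda$, each factor embedding separately into the corresponding factor of $W_G(L, \lambda)$, so the unipotent-side compatibility can be applied to each factor independently and reassembled to yield the full statement.
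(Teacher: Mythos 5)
Your outline converges, in its last paragraph, on the right idea, but two of your main steps do not hold up as stated. First, your foundation --- that ${\rm End}_G(R_L^M(\lambda))$ is an Iwahori--Hecke algebra of type $W_M(L,\lambda)$ --- is only available when $R_L^M$ is Harish-Chandra induction (the $1$-split or $2$-split situations treated by Lusztig \cite{L1} and Waldspurger \cite{W1}, which is what Lemmas \ref{2A} and \ref{2B} use). Here $M$ and $L$ are $2f$-split (resp.\ $f$-split) Levi subgroups, the tori involved have order $q^{2f}-1$ or $q^f\pm 1$, and $R_L^M$ is genuine Lusztig induction of a virtual character; there is no endomorphism algebra in the Howlett--Lehrer sense, and the existence of a bijection with signs between ${\rm Irr}(W_M(L,\lambda))$ and the constituents of $R_L^M(\lambda)$, compatible with induction, is precisely the content of (\cite{BMM}, 3.2), proved there by explicit combinatorics in the unipotent case rather than deduced from a Hecke algebra isomorphism. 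So your first two paragraphs assume essentially what has to be proved.

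Second, your reassembly along the two eigenvalues $\pm 1$ of $s$ is the correct strategy --- it is what the paper does --- but the step that makes it legitimate is missing. To apply the unipotent result of \cite{BMM} to each factor you must transport $R_L^M(\lambda)$ through the Jordan decomposition to $C_{G^*}(s)=K_1\times K_2$ and know that it becomes $R_{L_1}^{M_1}(\lambda_1)\times R_{L_2}^{M_2}(\lambda_2)$ there; that is, that Lusztig induction commutes with Jordan decomposition. This is a substantive theorem, proved for $H_n$ in (\cite{BS2}, 4.2) using \cite{W1} and for $G_n$ in \cite{FS1}, and it is the actual engine of the proof; asserting that the two wreath-product factors of $W_M(L,\lambda)$ ``act separately'' on a Hecke algebra does not substitute for it. Finally, for $H_n$ one factor of $C_{G^*}(s)$ is a disconnected even orthogonal group, so (\cite{BMM}, 3.2) does not apply as stated; one needs Malle's extension \cite{M} to disconnected groups, together with Lusztig induction for disconnected groups as in \cite{DM}. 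With these three inputs supplied, your outline becomes the paper's proof.
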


\noi  \begin{proof}
 If $G=G_n$ (resp. $H_n$) the quadratic unipotent characters are of the form
$\chi_{({\mu}_1, \mu_2)}$ (resp. $\chi_{(\Lambda_1,\Lambda_2)}$)
where $\mu_1, \mu_2$ are partitions and $\Lambda_1,\Lambda_2$ are
symbols. In this case the characters are in a fixed Lusztig series
and thus in bijection with the unipotent characters of the centralizer
of a semisimple element. Thus we have fixed integers $n_1, n_2$ such that
$n_1+ n_2=n$, and $\mu_1, \mu_2$ are partitions of $n_1, n_2$ respectively
and $\Lambda_1,\Lambda_2$ are symbols of rank $n_1, n_2$ respectively.

\noi In the case of the unipotent characters of $G_n$ and $H_n$
the group $M$ has been described in (\cite {BMM}. p.46, p.49-52).
From our choice of $f$ the group $M$ in our case can be assumed
to have the following form: $M = GL(b,q^{2f}) \times G_k$  for some $b,k$
in the case of $G_n$ and  $M = GL(b,q^{f}) \times H_k$
 or $M = U(b,q^{f}) \times H_k$ for some $b,k$ n the case of $H_n$,
 We have $b \leq M_1+M_2$.

\noi Suppose $L$ is embedded in $M$ as
follows. Let $T_1=T_{1,1}\times T_{1,2}$, $T_2=T_{2,1}\times T_{2,2}$.

\noi Case 1. Let $T_{1,1}\times T_{2,1}\subseteq GL(b,q^{f})$, $T_{1,2}\times
T_{2,2}\times G_r \subseteq G_k$,
where $T_{1,1}$ (resp. $T_{2,1}$) is isomorphic to $b_1$ (resp.
$b_2$) copies of tori of orders $q^{2f}-1$.

\noi Case 2. Let $T_{1,1}\times T_{2,1}\subseteq GL(b,q^{f}) \ {\rm or} \ U(b,q^{f})$,
$T_{1,2}\times T_{2,2}\times H_r \subseteq H_k$,
where $T_{1,1}$ (resp. $T_{2,1}$)  is isomorphic to $b_1$ (resp.
$b_2$) copies of tori of orders $q^{f}-1$ or $q^{f}+1$.

\noi Recall that $W_G(L, \lambda)=N_G(L, \lambda)/L = W_1 \times W_2
 \cong {\bf Z}_{2f} \wr
S_{M_1} \times {\bf Z}_{2f} \wr S_{M_2}$.

\noi
Since the character $\lambda$ takes the value $1$
on $T_1$ and $\mathcal E$ on $T_2$, we see that for both $G_n$
and $H_n$ we get $W_M(L,\lambda) = {W_1}' \times {W_2}'$ where ${W_1}' \cong
S_{b_1} \times ({\bf Z}_{2f})\wr S_{M_1-b_1}$ and ${W_2}'
\cong S_{b_2} \times ({\bf Z}_{2f})\wr S_{M_2-b_2}$.

\noi Now we consider Lusztig induction $R_L^M(\lambda)$ where
$\lambda$ is of the form $\chi_{(\lambda_1, \lambda_2)}$,
where $(\lambda_1, \lambda_2)$ is a pair of
partitions or symbols. Using results of Waldspurger \cite{W1}
it was shown in (\cite{BS2}, 4.2) for the case of $H_n$ that
Lusztig induction commutes with Jordan decomposition. More precisely,
we have:  The constituents of $R_L^M(\lambda)$
are of the form $\chi_{(\mu_1,\mu_2)}$, where the $\mu_i$ are
obtained from the $\lambda_i$ by adding a succession of hooks
or cohooks.

\noi We consider the case of $H_n$. Then $C_{G^{*}}(s) = K_1 \times K_2$
where $K_1$ (resp. $K_2$) is isomorphic to $SO(2m_1+1)$ (resp. $O^{\pm 1}(2m_2)$)
 with $m_1+m_2=n$. We have subgroups $M^{*}$, $L^{*}$ which are
intersections of subgroups dual to $M$, $L$ with $C_{G^{*}}(s)$. Then we have
 $M^{*}= M_1 \times M_2$, $L^{*} = L_1 \times L_2$, where $M_1, L_1 \subseteq K_1$
 and $M_2, L_2 \subseteq K_2$, and characters $\lambda_i$ of $L_i$, $i=1,2$.
 By applying (\cite{BMM}, 3.2) to the  $K_i$ we have isometries between
  the ${\bf Z}$-spans of the set ${\rm Irr}(W_{M_i}(L_i,\lambda_i))$ and
 the set of constituents of $R_{L_i}^{M_i}(\lambda_i)$   such that
 $ R_{M_i}^{K_i} \ . \ I^{M_i}_{L_i, (\lambda_i)} =
 I^{K_i}_{(L_i, \lambda_i)}\ . \ {\rm Ind}^{W_{K_i}(L_i,
 \lambda_i)}_{W_{M_i}(L_i,\lambda_i)}$, $i-1,2$. Here we note that in the case
 of groups of the form $O^{\pm 1}(2m_2)$ we use the results of Malle \cite{M} which
 extend (\cite{BMM}, 3.2) to disconnected groups. We also use Lusztig induction
 in disconnected groups (see \cite{DM}).

\noi We  define $I^M_{(L, \lambda)}$ as follows. Let $(\psi_1, \psi_2) \in
{\rm Irr}(W_M(L,\lambda) = {W_1}' \times {W_2}')$. We identify ${W_i}'$ with
$W_{M_i}(L_i,\lambda_i)$. Suppose $I^{K_i}_{(L_i, \lambda_i)}(\psi_i)=
\chi_{\mu_i}$, a constituent of $R_{L_i}^{M_i}(\lambda_i)$. Then define
$I^M_{(L, \lambda)}((\psi_1, \psi_2))= \chi_{(\mu_1,\mu_2)}$.
 We then have an isometry $I^M_{(L, \lambda)}$ between the ${\bf Z}$-spans of
 the set ${\rm Irr}(W_M(L,\lambda))$ and the set of constituents of
 $R_L^M(\lambda)$   such that
 $ R_M^G \ . \ I^M_{(L, \lambda)} = I^G_{(L, \lambda)}\ . \ {\rm Ind}^{W_G(L,
 \lambda)}_{W_M(L, \lambda)}$.

\noi The case of $G_n$ is similar and easier. It was shown in \cite{FS1} that
Lusztig induction commutes with Jordan decomposition in that case.
This proves the theorem.
\end{proof}

\noi The proof of Theorem \ref{5A} is a formal extension of (\cite{BMM},
3.2). We now give an explicit description of the maps $I^G_{(L,
\lambda)}$ in our case, as in (\cite{BMM}, pp.47,50).

\noi In the case of $G=G_n$, the parametrization of quadratic
unipotent characters  is either by pairs of partitions $\mu_1, \mu_1$ or by
$4$-tuples $(m_1, m_2, \rho_1, \rho_2)$. In the case of $U(n.q)$,
the latter arises from their construction
by Lusztig by Harish-Chandra induction. Consider the characters occurring in
$R_L^G(\lambda)$ for appropriate $( L, \lambda)$. The description given in
(\cite{BMM}, p.50) shows that, given such a character, each $\mu_i$
corresponds to  a $2f$-tuple of partitions whose sizes add up to
$M_i$,  $i=1,2$. (Here the $M_i$ are weights, denoted by $a$ in op.cit.
where the characters are unipotent.) These $2f$-tuples are in fact $2f$-quotients of the
$\mu_i$. Now Olsson (\cite{O}, p.233) has defined the $e$- quotient of a symbol for a
positive integer $e$, and his definition shows that the $2f$-quotients  of the $\mu_i$
are in fact the $2f$-quotients  of the $\rho_i$.
Since the irreducible characters of
$W_G(L, \lambda)$ are parameterized by pairs of $2f$-tuple of partitions,
this defines the map $I^G_{(L, \lambda)}$ in this case.

\noi Now consider the case of $G=H_n$ where the parametrization of quadratic
unipotent characters  is either by pairs of symbols $\Lambda_1, \Lambda_2$ or by
$4$-tuples $(h_1, h_2, \rho_1, \rho_2)$. Here again the latter arises from their construction
by Lusztig \cite{L1} by Harish-Chandra induction.
The connection between the pairs
$\Lambda_1, \Lambda_2$ and the pairs $(\rho_1, \rho_2)$ was stated in the proof of
Lemma 2.2. In (\cite{BMM}, p.50) it is shown how the map $I^G_{(L, \lambda)}$
is defined for unipotent characters in this case.
Using this we define the map $I^G_{(L, \lambda)}$ by taking $2f$-quotients of the $\rho_i$.

\noi Then we have a bijection with signs between the set of quadratic unipotent characters
occurring in $R_L^G(\lambda)$ and the set ${\rm Irr}(W_G(L,\lambda))$. We then see that
the character of $G_n$ parameterized by $(m_1, m_2, {\rho}_1, {\rho}_2)$ and the
character of $H_n$ parameterized by $(h_1, h_2, {\rho}_1, {\rho}_2)$ correspond to the
same character in ${\rm Irr}(W_G(L,\lambda))$ in the above bijection, where we
choose $G, L, \lambda$ appropriately in each case.

\noi We thus have:

\begin {theorem} \label{5B} Let $B$ and $b$ be blocks with
abelian defect groups of a pair $G_n$
and $H_m$ which correspond as in Section $4$, Theorems \ref{4A}, \ref{4E}, \ref{4G}.
Then the correspondence
between the sets of quadratic unipotent characters in $B$ and $b$
factors through the isometry of these sets with the sets
${\rm Irr}(W_G(L,\lambda))$ with appropriate $G, L, \lambda$ for $G_n$
and $H_m$.
\end{theorem}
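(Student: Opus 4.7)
\noi The plan is to combine the explicit form of the block bijection from Section~4 with the factorization provided by Theorem~\ref{5A} in the specific form given in the paragraphs immediately preceding the theorem. First I would fix a corresponding pair $(B,b)$: by Theorem~\ref{4A}, \ref{4E}, or \ref{4G}, there are parameters $(m_1, m_2, {\sigma}_1, {\sigma}_2, M_1, M_2)$ and $(h_1, h_2, {\sigma}_1, {\sigma}_2, M_1, M_2)$, together with Levi subgroups $L \subseteq G_n$ and $L' \subseteq H_m$ of the form $T_1 \times T_2 \times G_{n'}$ and $T_1' \times T_2' \times H_{m'}$ respectively, each carrying a cuspidal quadratic unipotent character $\lambda_G = 1 \times \mathcal{E} \times \chi_{(\lambda_1,\lambda_2)}$ and $\lambda_H = 1 \times \mathcal{E} \times \chi_{(\pi_1,\pi_2)}$ as in Theorem~\ref{3A}, such that the quadratic unipotent characters in $B$ (resp. $b$) are the constituents of $R^{G_n}_L(\lambda_G)$ (resp. $R^{H_m}_{L'}(\lambda_H)$). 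Applying Theorem~\ref{5A} with $M=G_n$ and with $M=H_m$ then yields isometries $I^{G_n}_{(L,\lambda_G)}$ and $I^{H_m}_{(L',\lambda_H)}$ whose codomains are both naturally identified with ${\rm Irr}({\bf Z}_{2f}\wr S_{M_1} \times {\bf Z}_{2f}\wr S_{M_2})$.

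\noi Next I would read off the explicit description of these isometries from the remarks following Theorem~\ref{5A}. That discussion identifies the image under $I^{G_n}_{(L,\lambda_G)}$ of the character parameterized by $(m_1, m_2, \rho_1, \rho_2)$, up to sign, with the irreducible character of $W_{G_n}(L,\lambda_G)$ indexed by the $2f$-quotients of $\rho_1$ and $\rho_2$; and exactly the same recipe, applied via the link between the symbols for $\rho_i$ and $\Lambda_i$ recorded in Lemma~\ref{2B}, produces the image of the character of $H_m$ parameterized by $(h_1, h_2, \rho_1, \rho_2)$. Since these $2f$-quotients depend only on $\rho_1$ and $\rho_2$, and not on the cuspidal parameters $m_i$ or $h_i$, both characters are sent to the same element of ${\rm Irr}(W(L,\lambda))$, which is exactly the factorization asserted in the theorem.

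\noi The hardest step will be the book-keeping in the Ennola-duality and $f$-twisting cases of Theorems~\ref{4E} and \ref{4G}: there, the block on one side is obtained from a ``standard'' block of the opposite type by transformations that interchange hooks and cohooks, so one must verify that this interchange is compatible with the $2f$-quotient construction underlying $I^G_{(L,\lambda)}$. I would handle this by invoking Lemma~\ref{3B} and the argument in the proof of Lemma~\ref{4D} to show that $f$-twisting preserves the induced parameterization by $(\rho_1, \rho_2)$; once this compatibility is established, the situation reduces to the case already covered by Theorem~\ref{4A}, and the theorem follows.
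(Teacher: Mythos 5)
Your proposal follows essentially the same route as the paper: Theorem \ref{5B} is deduced from the explicit description of $I^G_{(L,\lambda)}$ via $2f$-quotients of the $\rho_i$ given in the discussion preceding the statement, so that the characters parameterized by $(m_1,m_2,\rho_1,\rho_2)$ and $(h_1,h_2,\rho_1,\rho_2)$ are sent to the same element of ${\rm Irr}(W_G(L,\lambda))$. Your additional care with the Ennola-duality and $f$-twisting book-keeping addresses a point the paper leaves implicit (``where we choose $G, L, \lambda$ appropriately in each case''), but it does not change the substance of the argument.
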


\noi Next we consider perfect isometries, and an analog of (\cite{BMM},
5.15). For this we need to consider characters $\theta \in {\rm
 Irr}(Z(L)_{\ell})$ for $L$ a Levi subgroup of $G=G_n$ or $G=H_n$ as in
 Theorem \ref{3A} (in the case of $H_n$ this subgroup was denoted by $K$).
 In (\cite{BMM}, 5.15)  a subgroup $G(\theta)$ of
 $G$ has been introduced. Here we give an alternative definition of
 this group, analogous to a definition in (\cite{CE1}, p.163).
 Let $L^*$ be a subgroup of $G^*$ in duality with $L$,  and
  $t \in (Z(L^*)_{\ell})$ an $\ell$-element. Then $C_{G^*}(t)^0$ is a Levi
 subgroup of $G^*$ and there is a subgroup $G(t)$ of $G$ in duality
 with $C_{G^*}(t)^0$. Since $\ell$ is odd $G(t)$ is isomorphic to $G(\theta)$, where
 $\theta$ corresponds to  a linear character
$\hat t$ of $G(t)$, defined when we have chosen a fixed embedding of
${\overline{\bf F}_q^*}$ into ${\overline{\bf Q}_l}$. We
 will use the subgroup $G(t)$ instead of $G(\theta)$ in the
 following. The groups $G(t)$ can be explicitly described as
 being isomorphic to $\prod_iGL(m_i, q^{2f}) \times G_r$ or
$\prod_iU(m_i, q^{2f}) \times G_r$ in the case of $G_n$, and to
$\prod_iGL(m_i, q^{f}) \times H_r$ or $\prod_iU(m_i, q^{f}) \times
H_r$ in the case of $H_n$.

\noindent We consider a quadratic unipotent block $b$ of $G=G_n$ or $H_n$.
The quadratic unipotent
characters in $b$ are constituents of $R^G_L(1 \times {\mathcal
E} \times {\chi}_{({\pi_1},{\pi_2})})$, where $L$ is a suitable Levi
subgroup and $({\pi_1},{\pi_2})$ are $2f$-core partitions or
$f$-core or $f$-cocore symbols. We now
consider the other characters in $b$. We apply (\cite{CE1}, Theorem 2.8)
which describes all the constituents in $b$ with only the restriction
that $\ell$ is good, which is true in our case. We also note that since
$t$ is an $\ell$-element, $G(t)$ is connected and $R_{G(t)}^G$ is an isometry.
Then we get that a character in $b$ is of the form $R_{G(t)}^G({\hat
t}\chi)$, up to sign, where $\chi$ is a quadratic unipotent character of $G(t)$.
We also note that an irreducible character of  $Z(L)_{\ell}\rtimes W_G(L, \lambda)$ can
be written as ${\hat t}\tau$ for some $t \in Z(L^*)_{\ell}$ and
an irreducible character  $\tau$ of $W_G(L, \lambda)$ as in (\cite{BMM}, p.71).

\noi The map $R_{G(t)}^G$ in the theorem of Cabanes-Enguehard (op. cit.) involves a
parabolic subgroup. By a recent result of Bonnaf\'e-Michel [J.Algebra 327 (2011), 506-526)]
showing that if $q > 2$ Mackey's Theorem holds, Lusztig induction $R_L^G$ where $G$
is a reductive group and $L$ is a Levi subgroup is independent of the choice of a
parabolic subgroup containing $L$.

\noi We now state the analog of (\cite{BMM}, 5.15) in our case.

\noindent \begin {theorem} \label{5C} Let $G=G_n$ or $G=H_n$.The map
$$I^G_{(L, \lambda)}: {\bf Z}{\rm Irr}(Z(L)_{\ell}\rtimes W_G(L, \lambda)) \rightarrow
{\bf Z}\ {\rm Irr}(G,b)$$ such that
$${\rm Ind}_{{\bf Z}{\rm Irr}(Z(L)_{\ell}.W_{G(t)}(L, \lambda)}^{{\bf Z}{\rm Irr}(Z(L)_{\ell}.W_G(L, \lambda)}({\hat t}\tau) \rightarrow
R_{G(t)}^G({\hat t}I^{G(t)}_{(L, \lambda)}(\tau))$$ is an
$\ell$-perfect isometry between  $(Z(L)_{\ell}\rtimes W_G(L,
\lambda)),b( 1 . (1 \times {\mathcal E})))$ and $(G, b)$.
\end{theorem}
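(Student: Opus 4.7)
The plan is to follow closely the strategy of \cite{BMM} 5.15 for unipotent blocks, reducing the assertion via Jordan decomposition to the unipotent-block case inside the endoscopic subgroups $G(t)$. All of the necessary ingredients are already assembled in the paragraphs preceding the theorem: the Cabanes--Enguehard parametrization of the characters in $b$ as $R_{G(t)}^G(\hat t\chi)$, the local isometries $I^M_{(L,\lambda)}$ of Theorem \ref{5A}, the fact that $R_{G(t)}^G$ is an isometry when $G(t)$ is connected (which holds here since $\ell$ is odd and $t$ is an $\ell$-element), and the Mackey formula of Bonnaf\'e--Michel, which makes Lusztig induction independent of the chosen parabolic.

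First I would use the Cabanes--Enguehard description to write every character in $b$ as $\pm R_{G(t)}^G(\hat t\chi)$ for some $t\in Z(L^*)_\ell$ and some quadratic unipotent character $\chi$ of $G(t)$, and dually decompose every irreducible character of $Z(L)_\ell\rtimes W_G(L,\lambda)$ into the Clifford-theoretic form ${\rm Ind}(\hat t\tau)$ with $\tau\in{\rm Irr}(W_{G(t)}(L,\lambda))$. Since characters attached to non-conjugate $t$'s lie in distinct rational Lusztig series, both sides split as orthogonal direct sums indexed by conjugacy classes of $t$, reducing the problem to constructing a perfect isometry for each fixed $t$.

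Next, for each such $t$ the group $G(t)$ is a product of general linear, unitary, symplectic, and (in the symplectic case, after Jordan decomposition) orthogonal factors. Applying \cite{BMM} 5.15 to each factor, extended by Malle \cite{M} to cover the disconnected orthogonal groups that may appear, yields a perfect isometry between the unipotent block of $G(t)$ containing $\chi$ and the corresponding block of $Z(L)_\ell\rtimes W_{G(t)}(L,\lambda)$; these local perfect isometries are realised by the maps $I^{G(t)}_{(L,\lambda)}$ already used in Theorem \ref{5A}. The compatibility $R_{G(t)}^G\circ I^{G(t)}_{(L,\lambda)}=I^G_{(L,\lambda)}\circ{\rm Ind}^{W_G(L,\lambda)}_{W_{G(t)}(L,\lambda)}$, a consequence of transitivity of Lusztig induction together with Theorem \ref{5A}, then glues the $t$-pieces into the single map asserted in the theorem.

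The main obstacle is verifying the two defining conditions of an $\ell$-perfect isometry, namely the $\ell$-integrality of the character-value ratios and the vanishing outside the appropriate $\ell$-regular/singular loci. On the central $\ell$-part the conditions are automatic because $\hat t$ realises the prescribed character of $Z(L)_\ell$ on both sides of the map. On the remaining part one reduces, via the character formula for $R_{G(t)}^G$ and the Bonnaf\'e--Michel Mackey theorem, to the corresponding statement inside each $G(t)$, where it is precisely \cite{BMM} 5.15. The only remaining point is that the orthogonal patching across distinct classes of $t$ preserves these integrality conditions, and this is formal once one knows that distinct $t$'s contribute to disjoint sets of characters on both sides.
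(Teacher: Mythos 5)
Your proposal is correct and follows essentially the same route as the paper: both arguments are a transcription of the proof of (\cite{BMM}, 5.15), using the Cabanes--Enguehard description of the characters in $b$ as $\pm R_{G(t)}^G(\hat t\chi)$, the decomposition of both sides by the $\ell$-elements $t$, the local isometries of Theorem \ref{5A} (which already encode the Jordan-decomposition reduction to unipotent data, with Malle's extension for the disconnected orthogonal factors), and the Bonnaf\'e--Michel Mackey formula. The only thing the paper makes explicit that you leave implicit is the list of the three places where ``unipotent'' must be replaced by ``quadratic unipotent'' in the BMM argument --- the $f$-Harish-Chandra theory of \cite{BS} (analog of BMM 5.18--5.19), Theorem \ref{5A} (analog of BMM 3.2), and the fact that $e$- or $f$-cuspidal quadratic unipotent characters lie in blocks of defect $0$ (analog of BMM 5.21).
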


\noi Here we interpret the character $1 . (1 \times {\mathcal E})$ as
follows. We have $W_G(L, \lambda)= W_1 \times W_2$ as in Theorem \ref{5A}.
 We take the trivial character $1$ on
$Z(L)_{\ell}$, the character $1$ on $W_1$ and the character
$\mathcal E$ on $W_2$. Then $b( 1 . (1 \times {\mathcal E})))$ is the
block containing $1 . (1 \times {\mathcal E})$ of $(Z(L)_{\ell}\rtimes
W_G(L, \lambda))$.

\noi \begin{proof}   We use the definition of $\ell$-perfect
isometry given in (\cite{BMM}, \ 5.11). We note the following points in
the proof of (\cite{BMM}, \ 5.15) at which unipotent characters have to
be replaced by quadratic unipotent characters.

\begin{itemize}
\item  The $f$-Harish-Chandra theory was proved for quadratic
unipotent characters in classical groups in (\cite{BS}), which gives
us the analog of (\cite{BMM},\ 5.19, \ 5.18).

\item We have verified the extension to our case of
(\cite{BMM}), \ 3.2) in Theorem \ref{5A}. This is used in (\cite{BMM},\ 5.17).

\item An $e$-cuspidal or $f$-cuspidal quadratic unipotent character is of defect
$0$ for $G=G_n$ or $G=H_n$. This follows by Jordan decomposition and
by degree considerations.  This generalizes (\cite{BMM}, \ 5.21).

\end{itemize}

\noi Then the proof is formally completely analogous to that of
(\cite{BMM}, \ 5.15). Part (ii) of the result shows that there is an
 $\ell$-perfect isometry between
$(Z(L)_{\ell}\rtimes W_G(L, \lambda), 1 . (1 \times {\mathcal
E}))$ and $(G, b)$.
\end{proof}

\noi We now consider the groups $G_n$ and $H_n$.

\noindent \begin {theorem}\label{5D} We have $\ell$-perfect isometries
in the sense of (\cite{BMM}, 5.11)
between corresponding blocks of the following groups:

(i) $ GL(n,q), \ \ell |(q^f+1)\ (f \ {\rm
odd})$ and $ Sp(2m,q), \ \ell |(q^f+1)\ (f \ {\rm odd})$,

(ii) $ U(n,q),\ \ell |(q^f-1)\ (f \ {\rm
odd}) $ and $ Sp(2m,q), \ \ell |(q^f- 1)\
(f \ {\rm odd})$ .

(iii) $ GL(n,q), \ \ell |(q^f+1)\ (f \
{\rm even})$ and $ Sp(2m,q), \ \ell
|(q^f+1)\ (f \ {\rm even}) $.

(iv) $ U(n,q), \ \ell |(q^f+1)\ (f \ {\rm
even})$  and $ Sp(2m,q),\ \ell |(q^f+ 1)\
(f \ {\rm even})$ .

\noi In (i) and (ii), the block of $G_n$ parameterized by  $(m_1, m_2,
{\sigma}_1, {\sigma}_2, M_1, M_2)$, where  $m_1(m_1+1)/2 +
m_2(m_2+1)/2 +2N_1+2N_2=n,$ corresponds to the block of $H_m$ parameterized
by $(h_1, h_2, {\sigma}_1, {\sigma}_2, M_1, M_2)$, where $h_1(h_1+1)
+ h_2^2 +N_1+N_2=m$. For the connection between the $M_i$ and the $N_i$
see Theorem \ref{3C}. In cases (iii) and (iv) the blocks correspond as
in Theorem \ref{4G}.

\end{theorem}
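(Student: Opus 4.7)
The plan is to apply Theorem \ref{5C} separately on each side of the correspondence and then compose the resulting $\ell$-perfect isometries. Fix one of the four cases and a matching pair $B\leftrightarrow b$ of blocks. By Theorem \ref{5C} applied to $G=G_n$ with the cuspidal datum coming from Theorem \ref{3A}(i), we obtain an $\ell$-perfect isometry
\[
I^{G_n}_{(L_G,\lambda_G)}\colon \mathbb{Z}\operatorname{Irr}\!\bigl(Z(L_G)_\ell\rtimes W_{G_n}(L_G,\lambda_G),\, b(1\cdot(1\times\mathcal{E}))\bigr)\;\longrightarrow\;\mathbb{Z}\operatorname{Irr}(G_n,B),
\]
and a similar isometry $I^{H_m}_{(L_H,\lambda_H)}$ for $H_m$ with $(L_H,\lambda_H)$ drawn from Theorem \ref{3A}(ii) or (iii). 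Since the composition of two $\ell$-perfect isometries is again $\ell$-perfect, the theorem will follow once the two ``local'' block pairs on the source side are identified.

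The identification rests on two observations already in hand. First, the defect groups $Z(L_G)_\ell$ and $Z(L_H)_\ell$ are isomorphic: this was built into the block bijections of Theorems \ref{4A}, \ref{4E} and \ref{4G}, and ultimately comes down to the fact that the Sylow $\ell$-subgroup of a torus of order $q^{2f}-1=(q^f-1)(q^f+1)$ coincides with that of $q^f-1$ or $q^f+1$ according as $\ell$ divides one or the other factor. Second, in both Case~1 and Case~2 described at the start of this section we have
\[
W_G(L,\lambda)\;\cong\;(\mathbb{Z}_{2f}\wr S_{M_1})\times(\mathbb{Z}_{2f}\wr S_{M_2})
\]
with the standard monomial action on the defect group, so the semidirect products $Z(L_G)_\ell\rtimes W_{G_n}(L_G,\lambda_G)$ and $Z(L_H)_\ell\rtimes W_{H_m}(L_H,\lambda_H)$ are isomorphic, and the canonical character $1\cdot(1\times\mathcal{E})$ is matched under this isomorphism since it depends only on the Weyl factors. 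Hence the designated blocks of the two local pairs coincide, and the composition $I^{H_m}_{(L_H,\lambda_H)}\circ\bigl(I^{G_n}_{(L_G,\lambda_G)}\bigr)^{-1}$ is the sought $\ell$-perfect isometry.

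It then has to be checked that the underlying bijection with signs on quadratic unipotent characters agrees with the correspondence of Section~4. This is precisely the content of Theorem \ref{5B}: both $I^{G_n}_{(L_G,\lambda_G)}$ and $I^{H_m}_{(L_H,\lambda_H)}$ factor, on the quadratic unipotent part, through the common set $\operatorname{Irr}(W_G(L,\lambda))$ via the $2f$-quotient construction, and the character of $G_n$ parameterised by $(m_1,m_2,\rho_1,\rho_2)$ is sent to the same irreducible of $W_G(L,\lambda)$ as the character of $H_m$ parameterised by $(h_1,h_2,\rho_1,\rho_2)$. This is exactly the matching recipe of Theorems \ref{4A}, \ref{4E} and \ref{4G}.

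I expect the main obstacle to lie in cases (i), (iii) and (iv), where the Section~4 correspondence is not direct but is mediated by Ennola duality (Lemma \ref{4C}) and/or $f$-twisting (Lemma \ref{4D}). The point to verify is that these combinatorial operations lift to $\ell$-perfect isometries between the pairs of blocks they match, so that composition with the isometry of the previous paragraph stays $\ell$-perfect. For Ennola duality this should follow from its compatibility with Lusztig induction, its preservation of character degrees and its effect on central characters; for $f$-twisting it should follow from the symbol combinatorics behind it, since Lusztig's degree formulas for quadratic unipotent characters in classical groups depend symmetrically on hooks and cohooks. Granting these compatibilities, the composite map is $\ell$-perfect in all four cases.
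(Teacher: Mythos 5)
Your proof is correct and is essentially the paper's argument: the paper's own proof of Theorem \ref{5D} is exactly the observation that, by Theorem \ref{5C}, each of $B$ and $b$ is $\ell$-perfectly isometric to the same block $b(1\cdot(1\times\mathcal{E}))$ of the common ``local'' group $Z(L)_{\ell}\rtimes W_G(L,\lambda)\cong Z(L)_\ell\rtimes((\mathbb{Z}_{2f}\wr S_{M_1})\times(\mathbb{Z}_{2f}\wr S_{M_2}))$, and one composes. Your final worry about Ennola duality and $f$-twisting is unnecessary for this statement: those operations are used only in Section 4 to specify \emph{which} block $b$ is matched with $B$ (i.e., to identify the common local datum), not as intermediate isometries that would themselves need to be shown $\ell$-perfect.
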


\noi \begin{proof} The theorem follows from Theorem \ref{5C}, since
in each case there is a perfect isometry between the blocks in
question and a block of a ``local" group of the form
$Z(L)_{\ell}\rtimes W_G(L, \lambda)$.
\end{proof}

\begin{theorem}\label{5E}  Suppose a block $B$ of $G_n$ and a block $b$ of $H_n$
correspond as in Theorem \ref{5D}. The quadratic unipotent characters in $B$ and $b$
correspond under the isometry as follows:  In cases (i) and (ii) above, the
character of $G_n$ parameterized by $(m_1, m_2, {\rho}_1, {\rho}_2)$ corresponds to
the character of $H_n$ parameterized by $(h_1, h_2, {\rho}_1, {\rho}_2)$.
In cases (iii) and (iv) the characters correspond as
in Theorem \ref{4G}.
\end{theorem}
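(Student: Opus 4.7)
The plan is to trace the isometry $B \to b$ of Theorem \ref{5D} back through the intermediate local block $(Z(L)_\ell\rtimes W_G(L,\lambda),\, b(1\cdot(1\times\mathcal E)))$ of Theorem \ref{5C}, and then invoke the explicit description of $I^G_{(L,\lambda)}$ given after Theorem \ref{5A}.

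First, in cases (i) and (ii) where $f$ is odd and no Ennola duality or $f$-twisting intervenes, apply Theorem \ref{5C} separately to $G_n$ and $H_m$. This yields two $\ell$-perfect isometries landing in $B$ and $b$ respectively, from a common source block: by the setup of Section 4 the defect groups of $B$ and $b$ and their relative Weyl groups $W_G(L,\lambda)\cong {\bf Z}_{2f}\wr S_{M_1}\times {\bf Z}_{2f}\wr S_{M_2}$ coincide, and the characters $1\cdot(1\times\mathcal E)$ match on both sides. The composite of the two isometries is precisely the isometry of Theorem \ref{5D}. Hence to identify the character pairs it suffices to show that under $I^{G_n}_{(L,\lambda)}$ and $I^{H_m}_{(K,\mu)}$ the characters parameterized by $(m_1,m_2,\rho_1,\rho_2)$ and $(h_1,h_2,\rho_1,\rho_2)$ correspond to the \emph{same} element of $\mathrm{Irr}(W_G(L,\lambda))$. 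This is exactly what Theorem \ref{5B} asserts: by the explicit description following Theorem \ref{5A}, the irreducible character of the relative Weyl group attached to either side is read off from the $2f$-quotient of $(\rho_1,\rho_2)$, using Olsson's $e$-quotient of a symbol in the symplectic case together with the formulas of Lemma \ref{2B} relating the symbols $\Lambda_i$ to the pairs $\rho_i$.

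Second, cases (iii) and (iv) follow from (i) and (ii) by composing with Ennola duality (Lemma \ref{4C}) and $f$-twisting (Lemma \ref{4D}). Ennola duality sends the quadratic unipotent character of $GL(n,q)$ parameterized by $(\mu_1,\mu_2)$ to the character of $U(n,q)$ with the same pair of partitions, so the $4$-tuple $(m_1,m_2,\rho_1,\rho_2)$ is preserved. Similarly, $f$-twisting of a pair of symbols preserves their $1$-core and, via the bijection between $f$-cohooks in $\Lambda$ and $f$-hooks in $\widehat\Lambda$ of Lemma \ref{3B}, preserves the pair $(\rho_1,\rho_2)$ up to the relabeling already built into Theorem \ref{4G}. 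Combining these with cases (i) and (ii) gives the correspondence described in Theorem \ref{4G}. The main obstacle is the combinatorial bookkeeping in the first step: one must verify that the explicit form of $I^G_{(L,\lambda)}$ proceeding via $2f$-quotients of the partitions $\mu_i$ (for $G_n$) or of the symbols $\Lambda_i$ (for $H_m$) descends under Lemma \ref{2B} to the \emph{same} $2f$-quotient of $(\rho_1,\rho_2)$ on both sides; once this combinatorial identification is in hand the rest of the argument is formal.
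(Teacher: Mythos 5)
Your proposal is correct and follows essentially the same route as the paper: the paper's own proof simply observes that taking $t=1$ in the map $I^G_{(L,\lambda)}$ of Theorem \ref{5C} restricts the perfect isometry to the quadratic unipotent characters, and then invokes Theorem \ref{5B} to see that both sides land on the same element of ${\rm Irr}(W_G(L,\lambda))$ via the quotient combinatorics. Your write-up is a more explicit unwinding of exactly those two steps (including the Ennola/$f$-twisting bookkeeping for cases (iii) and (iv), which the paper leaves implicit in its citation of Theorem \ref{4G}).
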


\begin{proof}
The theorem follows from the fact that
in the map $I^G_{(L, \lambda)}$ in Theorem \ref{5C} we can take $t=1$.
Using Theorem \ref{5B} we get the correspondence between characters
as in Theorem \ref{2D}.
\end{proof}

\noi {\bf Remark}.  The case of $G_n$ is easier than that of $H_n$, as is seen below.

Let $G=G_n$, $B$ a quadratic unipotent block of $G_n$. The quadratic
unipotent
 characters in $B$ are of the form $\chi_{({\mu}_1, \mu_2)}$ in the
 Lusztig series  ${\mathcal E}(G, (s))$, where
$({\mu}_1, \mu_2)$ are partitions of a fixed pair $k_1, k_2$
respectively. By a result of Bonnaf\'e and Rouquier the block $B$ is
Morita equivalent to a block $B(s)$ of $C_G(s)$. Now since $s$ is
central in $C_G(s)$ the block $B(s)$ can be regarded as the product
of two unipotent blocks of $C_G(s)$, and thus (\cite{BMM},5.15) can
be applied to it. We get a perfect isometry between the block and a
quadratic unipotent block of the ``local subgroup"
$Z(L)_{\ell}\rtimes W_G(L, \lambda)$.

\noi We now consider signs appearing in the perfect isometries of Theorem \ref{5D}
and Theorem \ref{5E}.
Consider a  quadratic unipotent character $\chi$ of $G_n$
 parameterized by  a pair $(\lambda_1, \lambda_2)$ of partitions which
corresponds to the
 quadratic unipotent character $\psi$ of $H_m$ parameterized by
 a pair $(\Lambda_1, \Lambda_2)$ of symbols
 under the perfect isometry. Enguehard ({\cite{E2}, p.34) has used the combinatorics
 of partitions and symbols to  define a sign $\nu_e$
 on partitions and symbols and uses them to calculate
  the signs which appear in ( {\cite{E2}, Theorem B), which is the same theorem
 as (\cite{BMM}, 3.2). Thus the sign appearing in the correspondence between
 $\chi$ and $\psi$ as above is $\nu_e(\lambda_1)\nu_e(\lambda_2) \nu_e(\Lambda_1)\nu_e(\Lambda_2)$.

\section {Endoscopic groups}

Let $G$ be a finite reductive group, $\ell$ a prime as before, and
$(s)$ an $\ell$-prime semisimple class in $G^*$. Let $B$ be an
$\ell$-block of $G$ parameterized by $(s)$. M.Enguehard has proved
the following \cite{E1}. There is a (possibly disconnected) group
$G(s)$ which need not be a subgroup of $G$, and a block $B(s)$ of
$G(s)$ such that $B$ and $B(s)$ correspond, in the following sense:

\begin{itemize}
\item There is a bijection between characters in $B$ and $B(s)$
\item The defect groups of $B$ and $B(s)$ are isomorphic
\item The Brauer categories of $B$ and $B(s)$ are equivalent
\end{itemize}

The group $G(s)$ is dual to the centralizer of $s$ in $G^*$.
 We call $G(s)$ an endoscopic group of $G$, in analogy with a
terminology used in $p$-adic groups. We describe the endoscopic
groups in our case (\cite{E1}, 3.5.4).

Case 1. $G=G_n$, $B$ corresponds to the Levi subgroup $L$  of the
form $T_1 \times T_2 \times G_{n'}$, $T_1$ (resp. $T_2$) is a
product of $M_1$ (resp. $M _2$) tori of order $q^{2f}-1$, and we
take a character of $L$ to be $1$ (resp. $\mathcal E$)  on $T_1$
(resp. $T_2$) and the character ${\chi}_{(\lambda_1,\lambda_2)}$
 of defect $0$ of $G_{n'}$. The pair $(\lambda_1,\lambda_2)$
 corresponds to a pair $(m_1, m_2)$ as before. Then $s \in G_n=G_n^*$
 has $n_1$ (resp. $n_2$) eigenvalues 1 (resp. -1) where
 $n_1=2fM_1+ |\lambda_1|$, $n_2=2fM_2+ |\lambda_2|$.

 Then $G(s) = G_n(s)\cong G_{n_1} \times G_{n_2}$.

 Case 2. $G=H_m$, $B$ corresponds to the Levi subgroup $L$  of the
form $T_1 \times T_2 \times H_{m'}$, $T_1$ (resp. $T_2$) is a
product of $M_1$ (resp. $M _2$) tori of order $q^{f}-1$ or
$q^{f}+1$, and we take a character of $L$ to be $1$ (resp. $\mathcal
E$) on $T_1$ (resp. $T_2$) and the character
${\chi}_{(\pi_1,\pi_2)}$
 of defect $0$ of $H_{m'}$. The pair $(\pi_1,\pi_2)$
 corresponds to a pair $(h_1, h_2)$ as before. Then $s \in H_m^*$
has $k_1$ (resp. $k_2$) eigenvalues 1 (resp. -1) where
 $k_1=fM_1+ \ {\rm rank}\pi_1$,  $k_2=fM_2+ \ {\rm rank}\pi_2$. We note that
$H_m^* \cong SO(2m+1)$.

Then $H(s) = H_m(s) \cong Sp(2k_1,q) \times O(2k_2,q)$. Here we get
$O^{+}(2k_2,q)$ if $h_2$ is even and $O^{-}(2k_2,q)$ if $h_2$ is odd
(see \cite{W1}, 4.3).

Under the Jordan decomposition of characters, the quadratic unipotent
characters of $G_n$ and $H_m$ correspond to characters of $G_n(s)$ and
$H_m(s)$ respectively which are tensor products of unipotent characters
with a fixed linear character $\hat{s}$. There is a bijection between the set of
quadratic unipotent blocks of $G_n$  (resp. $H_m$) and the set of blocks of
$G_n(s)$  (resp. $H_m(s)$ ) which contain the characters as above, and then
a bijection between the set of
quadratic unipotent blocks of $G_n$  (resp. $H_m$) and the set of unipotent
blocks of $G_n(s)$  (resp. $H_m(s)$ ) .
The proof of the theorem below follows from these bijections.

\noi \begin{theorem} \label{6A} We have block correspondences between
 unipotent blocks of endoscopic groups as follows. As in
Theorems \ref{4A},\ref{4E},\ref{4G} we have (i) the defect groups of
corresponding blocks $B$ and $b$ are isomorphic, and (ii) there is a
natural bijection between the  unipotent characters in $B$ and those
in $b$.

$\lbrace \ell-{\rm blocks \ of}\ GL(n_1,q)\times GL(n_2,q), \ \ell
|(q^f+1)\ (f \ {\rm odd}) ,\ n\geq 0 \rbrace$ and
   $ \lbrace \ell-{\rm blocks
\ of} \ Sp(2k_1,q)\times  O(2k_2,q), \ \ell |(q^f+1)\ (f \ {\rm odd}) ,
\ m \geq 0 \rbrace$.

$\lbrace \ell-{\rm blocks \ of}\ U(n_1,q)\times U(n_2,q), \ \ell
|(q^f-1)\ \ (f \ {\rm odd}) ,\ \ n\geq 0 \rbrace$ and
   $ \lbrace \ell-{\rm blocks
\ of} \ Sp(2k_1,q)\times  O(2k_2,q),  \ell |(q^f-1)\ (f \ {\rm odd}) ,
\ m \geq 0 \rbrace$

$\lbrace \ell-{\rm blocks \ of}\ U(n_1,q)\times U(n_2,q), \ \ell
|(q^f+1)\ \ (f \ {\rm even}) ,\ \ n\geq 0 \rbrace $ and
   $ \lbrace \ell-{\rm blocks
\ of}\ Sp(2k_1,q)\times  O(2k_2,q), \ \ell |(q^f+1)\ (f \ {\rm even}) , m
\geq 0 \rbrace$

$\lbrace \ell-{\rm blocks \ of}\ GL(n_1,q)\times GL(n_2,q), \ \ell
|(q^f+1)\ (f \ {\rm even}) , \ n\geq 0 \rbrace $ and
   $ \lbrace \ell-{\rm blocks
\ of} \ Sp(2k_1,q)\times  O(2k_2,q), \ \ell |(q^f+1)\ (f \ {\rm even}) ,
\ m \geq 0 \rbrace$

\noi Here $n=n_1+n_2$ and $m=k_1+k_2$ correspond as before, and
$n_1$, $n_2$, $k_1$, $k_2$ are as defined.
\end{theorem}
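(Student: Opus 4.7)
\noi The plan is to assemble Theorem \ref{6A} by composing three bijections, each of which is already in place. For a given case (take for instance $G_n = U(n,q)$ with $\ell \mid q^f-1$, $f$ odd, yielding the second asserted correspondence), I would start from the block bijection between quadratic unipotent $\ell$-blocks of $G_n$ and of $H_m = Sp(2m,q)$ provided by Theorem \ref{4A}; then compose on each side with Enguehard's correspondence between a quadratic unipotent block $B$ of $G_n$ (resp.\ $b$ of $H_m$) and a block $B(s)$ of the endoscopic group $G_n(s) \cong G_{n_1}\times G_{n_2}$ (resp.\ $b(s)$ of $H_m(s) \cong Sp(2k_1,q)\times O(2k_2,q)$), as recalled at the start of Section 6.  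Finally, since $s^2=1$ and $s$ is central in each endoscopic group, tensoring by the linear character $\hat s$ identifies $B(s)$ (which by Jordan decomposition collects characters in the Lusztig series of $s$) with a unipotent block, and similarly for $b(s)$.  Composing these three bijections in the appropriate order yields the asserted bijection.

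\noi Both properties (i) and (ii) are then preserved step by step: property (i) holds because the block correspondences of Section 4 and Enguehard's theorem both preserve defect groups up to isomorphism, and property (ii) holds because Jordan decomposition gives a bijection between the quadratic unipotent characters of $G_n$ (resp.\ $H_m$) and the characters of the form $\hat s\otimes\chi_u$ with $\chi_u$ unipotent, while the Section 4 bijection restricts to the quadratic unipotent characters of $B$ and $b$.  To pin down the numerical relationship between the parameters, I would use the descriptions at the end of Section 6: writing $B$ and $b$ in the parametrizations of Theorems \ref{3C} and \ref{3D} as $(m_1, m_2, \sigma_1, \sigma_2, M_1, M_2)$ and $(h_1, h_2, \sigma_1, \sigma_2, M_1, M_2)$ with $(m_1,m_2)\leftrightarrow (h_1,h_2)$ via Waldspurger's map (Lemma \ref{2C}), the identities $n_i = 2fM_i + |\lambda_i|$ on the $G_n$-side and $k_i = fM_i + \mathrm{rank}(\pi_i)$ on the $H_m$-side determine $n_1, n_2$ and $k_1, k_2$ uniquely and ensure that $n_1+n_2=n$ and $k_1+k_2=m$ correspond under the Section 4 bijection.

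\noi The point I expect to need most care is verifying that Enguehard's correspondence commutes properly with the Section 4 bijection in the cases where the latter is obtained through Ennola duality (Lemma \ref{4C}) or $f$-twisting (Lemma \ref{4D} and Theorem \ref{4G}).  For Ennola duality this is straightforward since the relevant $2f$-core data and the structure of the endoscopic group (which differs only by replacing $GL$ factors by $U$ factors) match transparently under the duality.  For $f$-twisting, Lemma \ref{3B} shows that $f$-cores and $f$-cocores of symbols are exchanged, and on the endoscopic side this corresponds to the choice between $O^+$ and $O^-$ factors in $H_m(s)$, a choice already encoded in the parity of $h_2$ (cf.\ Case 2 of the endoscopic description).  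Hence the same composition scheme handles all four listed cases, and properties (i) and (ii) propagate through the composition without further obstruction.
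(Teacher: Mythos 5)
Your proposal is correct and follows essentially the same route as the paper: the paper likewise obtains Theorem \ref{6A} by composing the block correspondences of Theorems \ref{4A}, \ref{4E}, \ref{4G} with Enguehard's bijection between quadratic unipotent blocks of $G_n$ (resp.\ $H_m$) and blocks of $G_n(s)$ (resp.\ $H_m(s)$), identified with unipotent blocks via tensoring with $\hat s$. Your additional remarks on the Ennola-duality and $f$-twisting cases and on the numerical identities $n_i=2fM_i+|\lambda_i|$, $k_i=fM_i+\mathrm{rank}(\pi_i)$ merely make explicit what the paper leaves implicit.
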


\noindent We now consider
 perfect isometries between the corresponding blocks above,
which follow easily from the case of \cite{BMM}.

\noi Let $B(s)$ be an $\ell$-block of $G_n(s)=G_1 \times G_2$,
where $G_1=G_{n_1}$ and $G_2=G_{n_2}$. Then
$B(s)$ factorizes as $B_1(s) \times B_2(s)$ where $B_1(s)$ and
$B_2(s)$ are blocks of $G_1$ and $G_2$ respectively. There are Levi
subgroups $L_1(s)$ and $L_2(s)$ of $G_1$ and $G_2$ respectively such
that $L_1(s)= T_1 \times G_{n'_1} $ and $L_2(s)= T_2 \times G_{n'_2}
$. Here $T_1$ (resp. $T_2$) is a product of $M_1$ (resp. $M_2$) tori
of order $q^{2f}-1$. Consider the ``local group" $({(T_1)}_{\ell}
\times  {(T_2)}_{\ell} )\ltimes ({\bf Z}_{2f} \wr S_{M_1} \times {\bf
Z}_{2f} \wr S_{M_2})$. A character $\theta$ of ${(T_1)}_{\ell}
\times  {(T_2)}_{\ell} $ factorizes as ${\theta}_1 \times
{\theta}_2$, where ${\theta}_i \in {\rm Irr}({(T_i)}_{\ell})$,
$i=1,2$. Then the pair ${\theta}_1 , {\theta}_2$ determines a pair
$(t_1,t_2)$ of $\ell$-elements in $G_1 \times G_2$, and then a
subgroup $G(t_1) \times G(t_2)$ of $G_1 \times G_2$ which plays a
role analogous to that of $G(t)$ in the case of $G_n$. Since $B(s)$
is a product of blocks of $G_1$ and $G_2$ containing characters
which are products of a fixed linear character and {\em unipotent}
characters, by an
application of \cite{BMM} we get a perfect isometry of $(G_n(s),B(s))$
with the principal block of the ``local group" $({(T_1)}_{\ell}
\times  {(T_2)}_{\ell} )\rtimes ({\bf Z}_{2f} \wr S_{M_1} \times {\bf
Z}_{2f} \wr S_{M_2})$.

\noi In the case of $(H_m(s),b(s))$ , similarly we get a perfect
isometry  with the principal block of the same ``local group"
${(T_1)}_{\ell} \times {(T_2)}_{\ell} \rtimes ({\bf Z}_{2f} \wr
S_{M_1} \times {\bf Z}_{2f} \wr S_{M_2})$. We note that here the
elements $t_1,t_2$ are to be taken in the dual group ${H_m}^*$. We
also note that as before, in the case where we have a group of the form
$O(2k,q)$ we use results of Malle \cite{M} extending
\cite{BMM} to disconnected groups. Finally we get a perfect isometry
between $B(s)$ and $b(s)$.

\end{document}